\documentclass[12pt,reqno,a4paper]{amsart}
\usepackage{lmodern}
\usepackage{fullpage}

\usepackage{setspace}
\usepackage{datetime}
\usepackage{verbatim}
\usepackage{url}
\usepackage{dsfont}
\usepackage[dvipsnames]{xcolor}
\usepackage{tikz}
\usepackage{amsmath}
\usepackage{cite}
\usepackage{euflag}

\usepackage{graphicx,caption,subcaption,epstopdf}

\usepackage[shortlabels]{enumitem} 

\allowdisplaybreaks

\usepackage[mathlines]{lineno}
\usepackage{etoolbox} 

\newtheorem{theorem}             {Theorem}[section]
 
\newtheorem{lemma}     [theorem] {Lemma}      
\newtheorem{fact}     [theorem] {Fact}

\newtheorem{proposition}[theorem] {Proposition}   
\newtheorem{claim}[theorem] {Claim}

\newenvironment{claimproof}[1][Proof of claim]{\begin{proof}[#1]}{\end{proof}}

\newtheorem{proto-theo}[theorem] {Proto-Theorem}   

\def\tc{\mathop{\text{\rm tc}}\nolimits}
\def\tp{\mathop{\text{\rm tp}}\nolimits}

\newcommand{\blue}{\mathrm{blue}}
\newcommand{\red}{\mathrm{red}}
\newcommand{\mix}{\mathrm{mix}}

\newcommand{\out}{\mathrm{out}}
\newcommand{\R}{\mathrm{R}}
\newcommand{\B}{\mathrm{B}}

\usepackage[hidelinks]{hyperref}

\begin{document}
	\pagestyle{plain}
	\thispagestyle{empty}
	\footskip=30pt
	\shortdate
	\settimeformat{ampmtime}
	\onehalfspacing

	\title[Monochromatic components in dense $2$-edge-coloured balanced bipartite graphs]%
	{Monochromatic components in dense $2$-edge-coloured balanced bipartite graphs}
	
	\author[C.~Bispo]{César Bispo}
    \author[G.~Kontogeorgiou]{George Kontogeorgiou}
    \author[M.~Lage]{Marcelo Lage}
	\author[G.~O.~Mota]{Guilherme~O.~Mota}
	\address{Instituto de Matem\'atica e Estat\'{\i}stica \\Universidade de 
		S\~ao Paulo, Rua do Mat\~ao 1010\\05508--090~S\~ao Paulo, Brazil}
	\email{\{cesar.bispo,\,marcelomlage,\,mota\}@ime.usp.br}

    \author[B.~Skarmeta]{Bruno Skarmeta}
    \address{Centro de Modelamiento Matem\'atico, Beaucheff 851, Santiago, Chile}
    \email{gkontogeorgiou@dim.uchile.cl}
    \email{bruno.skarmeta@ug.uchile.cl}


	\begin{abstract}
	We prove that each $2$-edge-coloured spanning subgraph $G$ of $K_{n,n}$ with
	$\delta(G)\ge \lfloor (2n+1)/3 \rfloor$ can be covered by at most three
	monochromatic components. We provide a $2$-edge-coloured spanning subgraph of
	$K_{n,n}$ showing this minimum degree condition is sharp.
	\end{abstract}

	\maketitle
	\onehalfspacing

\section{Introduction}

Monochromatic covers and partitions in edge-coloured graphs are classical topics
in Ramsey theory, introduced in the early twentieth
century~\cite{ramsey1930problem, erdos1935combinatorial, erdos1947some}. Given a
graph~$G$ and a positive integer $r$, let $\tc_r(G)$ be the least integer $k$
such that, in every $r$-edge-colouring of~$G$, the set $V(G)$ can be covered by
at most $k$ monochromatic components. Similarly, let $\tp_r(G)$ be the least
integer $k$ such that, in every~$r$-edge-colouring of $G$, the set $V(G)$ can be
partitioned into at most $k$ monochromatic components.

For $r\ge2$, Erd\H{o}s, Gy\'arf\'as, and Pyber~\cite{erdos1991vertex}
conjectured that $\tp_r(K_n)\le r-1$. Considering bipartite graphs, a classical
conjecture of Gy\'arf\'as and Lehel asserts that, for $r\ge2$, every
$r$-edge-colouring of the complete bipartite graph $K_{n,m}$ can be covered by
at most $2r-2$ monochromatic connected components; that is, $\tc_r(K_{n,m})\le
2r-2$. This conjecture is closely related to Ryser-type covering problems for
hypergraphs; see~\cite{gyarfas1977partition, lehel1998ryser, chen2012around} for
the original formulation and further developments.

There are also several related problems in which one asks for partitions or
covers by more restrictive monochromatic structures, such as paths or cycles, as in the work of Gy\'arf\'as~\cite{gyarfas1983vertex}. DeBiasio and
Nelsen~\cite{debiasio2016monochromatic} considered monochromatic cycle
partitions under minimum-degree conditions, while Gy\'arf\'as, Ruszink\'o,
S\'ark{\"o}zy, and Szemer\'edi~\cite{gyarfas2006onesided} studied one-sided
coverings of coloured complete bipartite graphs. More recently, Benevides,
Quintino, and Talon~\cite{benevides2024partitioning} proved that every
$2$-edge-colouring of $K_{n,n}$ can be partitioned into at most four monochromatic
cycles, with vertices and edges viewed as degenerate cycles.

Recent work has focused on covering or partitioning the vertices of graphs
satisfying minimum-degree conditions into monochromatic components, beginning
with Bal and DeBiasio~\cite{bal2017partitioning}. For $2$-edge-colourings of general
graphs, Gir\~ao, Letzter, and Sahasrabudhe~\cite{girao2019partitioning}
strengthened earlier bounds by proving that every sufficiently large
$n$-vertex graph $G$ with minimum degree $\delta(G)\ge (2n-5)/3$ satisfies $\tp_2(G)\le 2$.

The analogous minimum-degree problem for bipartite graphs presents distinct
structural challenges. Recently, Fern\'andez, Pavez-Sign\'e, and
Stein~\cite{fernandez2024monochromatic} proved that, for every $\varepsilon>0$
and all sufficiently large $n$, every spanning subgraph $G$ of $K_{n,n}$ with
$\delta(G)\ge(13/16+\varepsilon)n$ satisfies $\tp_2(G)\le3$.

One might wonder whether it is possible to cover dense bipartite graphs using
only two monochromatic components, as in the non-bipartite setting. For
$n\ge2$, let $G$ be obtained from $K_{n,n}$, with bipartition $(X,Y)$, by
deleting the edges $x_1y$ and $x_2y$, where $x_1,x_2\in X$ are distinct and
$y\in Y$. Then $\delta(G)=n-2$. Colour red every edge incident with $y$ or~$x_1$, and colour blue every remaining edge. No monochromatic component
contains more than one of $x_1$, $x_2$, and $y$. Thus $\tc_2(G)\ge3$, showing
that any minimum-degree condition guaranteeing $\tc_2(G)\le2$ must require
minimum degree at least $n-1$. It is therefore impossible to obtain a linear improvement, with respect to the trivial bound $n$, on the minimum degree required for covering with two monochromatic components. In fact, even a random bipartite graph with density $p$ up to $1-3\frac{\log n}{n}$, with high probability, cannot be covered by two monochromatic components~\cite{fernandez2024monochromatic}.   

We therefore determine the minimum-degree threshold that guarantees $\tc_2(G)\le3$ for
every spanning subgraph $G$ of $K_{n,n}$. Our main result is the following.

\begin{theorem}\label{thm:main}   
Let $n \geq 2$ be an integer. If $G$ is a spanning subgraph of $K_{n,n}$ with
minimum degree $\delta(G)\ge \lfloor (2n+1)/3 \rfloor$, then $\tc_2(G)\le 3$.
Moreover, there exists a spanning subgraph $H$ of $K_{n,n}$ with $\delta(H) =
\lfloor (2n+1)/3 \rfloor-1$ and~$\tc_2(H) \geq 4$.
\end{theorem}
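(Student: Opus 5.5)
The argument has two parts: an upper bound, obtained by a structural case analysis of the red and blue component structure, and a sharpness construction. Write $d=\lfloor (2n+1)/3\rfloor$, so $d\ge (2n-1)/3$ and hence $3d\ge 2n-1$. The proof leans on two elementary facts. Any two vertices on the same side have at least $2d-n\ge (n-1)/3$ common neighbours. Any vertex has at least $d$ neighbours on the other side, so at most $n-d\le (n+1)/3$ non-neighbours there.

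For the upper bound, fix a $2$-edge-colouring of $G$ with sides $X$ and $Y$. Each vertex $v$ lies in exactly one red component $R(v)$ and one blue component $B(v)$. Red components are pairwise vertex-disjoint, and so are blue ones. Pick a vertex $x\in X$ and try the cover $R(x)\cup B(x)\cup C$, where $C$ is a single extra component.

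A vertex $y\in N(x)$ lies in $R(x)$ or in $B(x)$, according to the colour of $xy$. So every $y\in Y$ missed by $R(x)\cup B(x)$ is a non-neighbour of $x$, and there are at most $n-d$ of these. A vertex $x'\in X$ is missed only if no common neighbour $y$ of $x$ and $x'$ has $xy$ and $x'y$ of the same colour. Since there are at least $2d-n$ common neighbours, this forces a rigid pattern: every common neighbour sees $x$ and $x'$ in opposite colours.

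The plan is to choose $x$ well. Choose $x$, or a pair of components, to maximise coverage, and then show that the leftover set $U$ lies in a single monochromatic component. The key tool is this. Two leftover vertices on the same side have at least $(n-1)/3$ common neighbours. A pigeonhole over colours, combined with the rigid opposite-colour pattern relative to $x$, should force them into a common monochromatic component, and all of $U$ into the same colour.

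The cleanest route is probably a case split on the number of red and of blue components meeting both sides. If some colour, say red, has a component $R$ that is spanning on one side, the remaining vertices can be handled by at most two blue components, using the degree condition. Otherwise every colour class is disconnected in a balanced way. One then counts: if red has components $R_1,R_2,\dots$ with parts $X_i,Y_i$, then a vertex in $X_1$ has all its red neighbours in $Y_1$, so $|Y_1|+(\text{blue degree})\ge d$. Deriving contradictions from these counts when four components would be needed is where the bound $3d\ge 2n-1$ enters.

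The main obstacle I expect is this counting case, where both colours have at least two nontrivial components, as in a $2\times 2$ "grid" colouring perturbed by missing edges. I would attempt it by taking a red component $R_1$ and a blue component $B_1$ that intersect, and showing that the uncovered part lies inside a single component. If not, there are three uncovered vertices, one per component type, analogous to $x_1,x_2,y$ in the introduction's example, and they pairwise lie in no common component. Counting their neighbourhoods, each of size at least $d$, within the three-way structure should give $3d\le 2n-2$, contradicting $3d\ge 2n-1$.

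For the sharpness example, partition each side into three nearly equal parts $X_1,X_2,X_3$ and $Y_1,Y_2,Y_3$ of sizes about $n/3$. Delete the edges between $X_i$ and $Y_i$, so each vertex has degree about $2n/3$. Colour $X_iY_j$ red or blue according to a Latin-square type rule designed so that every monochromatic component misses an entire part. Four components are then needed: choose vertices $x_1\in X_1$, and so on, that no three components can cover. The sizes are adjusted according to $n \bmod 3$ so that $\delta(H)=d-1$. I would verify this by checking the three residues separately.
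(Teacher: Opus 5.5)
\textbf{Upper bound.} What you give is a plan rather than a proof. The decisive steps (``should force'', ``should give $3d\le 2n-2$'') are never carried out, and the intermediate statement they are meant to establish is false.

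Take $n=3m$ and split $X=X_1\cup X_2\cup X_3$ and $Y=Y_1\cup Y_2\cup Y_3$ into parts of size $m$. Let $G$ contain exactly the edges between $X_i$ and $Y_j$ for $i\neq j$, so $\delta(G)=2m=\lfloor(2n+1)/3\rfloor$. Colour $X_iY_{i+1}$ red and $X_iY_{i+2}$ blue (indices mod $3$). The red components are $X_i\cup Y_{i+1}$ and the blue ones are $X_i\cup Y_{i+2}$.

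For $x\in X_1$, the part left uncovered by $R(x)\cup B(x)$ is $X_2\cup X_3\cup Y_1$, and no monochromatic component meets both $X_2$ and $X_3$. Indeed, any $x_2\in X_2$ and $x_3\in X_3$ have all $m=2d-n$ common neighbours in $Y_1$, seen in blue by $x_2$ and in red by $x_3$. So your pigeonhole step fails. By symmetry the same happens for every vertex, that is, for every intersecting pair of a red and a blue component. Yet the three red components cover $V(G)$.

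Consequently, no choice of $x$, or of intersecting $R_1,B_1$, rescues your scheme. No contradiction with $3d\ge 2n-1$ can come out of it either, since this graph satisfies the hypothesis.

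Separately, the inference ``the leftover is not in one component, hence there are three pairwise separated uncovered vertices'' is invalid. Fact~\ref{fact:auxiliary} only yields two uncovered vertices lying in no common component. In the example, $Y_1$ shares a component with both $X_2$ and $X_3$.

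\textbf{What is missing.} You need a way to decide which \emph{shape} of three-component cover to aim for, including covers by three components of a single colour. The paper obtains this from Propositions~\ref{prop:5} and~\ref{prop:7}:
\begin{itemize}
\item two components covering one side suffice;
\item a component with $\delta^*(n)$ vertices on one side suffices;
\item three same-coloured components covering one side suffice.
\end{itemize}
These are combined with dominant colours. If three red components contain red-dominant vertices of $X$, each has at least $\lceil\delta^*(n)/2\rceil$ vertices in $Y$, so together they cover $Y$. This is exactly how the example above is handled, since every vertex there is red-dominant. Otherwise at most two red and two blue components carry all dominant vertices. The sets $X_{\R},Y_{\R},X_{\B},Y_{\B},Y_{\out}$ are then analysed in Parts~1 and~2, ending with the counting contradiction for the crossed configuration.

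\textbf{Sharpness.} Your construction cannot work with a block-wise colouring. After deleting $X_iY_i$, the six blocks form the $6$-cycle $X_1Y_2X_3Y_1X_2Y_3$ on the parts. Every non-singleton monochromatic component is the union of the parts along a maximal monochromatic arc of this cycle. Three components then always suffice:
\begin{itemize}
\item if the colours alternate, the three red blocks form a perfect matching of the six parts (this is the example above);
\item with four arcs of lengths $(3,1,1,1)$ or $(2,1,2,1)$, two arcs cover everything;
\item with lengths $(2,2,1,1)$, three arcs do;
\item with at most two arcs, two suffice.
\end{itemize}
This holds for all part sizes, so adjusting sizes modulo $3$ cannot help.

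The paper's graph is this block graph with $x_{\red},x_{\mix},y_{\mix},y_{\blue}$ playing the roles of vertices of $X_1,X_2,Y_2,Y_1$, with three changes:
\begin{itemize}
\item the edges $x_{\red}y_{\mix}$ and $x_{\mix}y_{\blue}$ are removed;
\item the colours at $x_{\mix}$ and $y_{\mix}$ are swapped relative to their blocks;
\item $x_{\red}$ and $y_{\blue}$ become isolated in one colour.
\end{itemize}
This makes the four special vertices pairwise separated. They have degree $a+b=\lfloor(2n+1)/3\rfloor-1$, where $a=\lfloor(n-2)/3\rfloor$ and $b=n-2-2a$. Some such break of the block pattern is indispensable.
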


\section{Lower-bound construction}\label{sec:lower}
The next lemma shows that the minimum degree condition in Theorem~\ref{thm:main} is tight.

\begin{lemma}\label{lem:lower} 
For every integer $n \geq 2$, there is a spanning subgraph $G$ of $K_{n,n}$ with
minimum degree $\delta(G) = \lfloor (2n+1)/3 \rfloor-1$ and~$\tc_2(G)\geq 4$.
\end{lemma}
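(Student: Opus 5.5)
The plan is to give an explicit blow-up construction and to certify $\tc_2(G)\ge 4$ through a set of witnesses: four vertices $w_1,\dots,w_4$ such that no monochromatic component contains two of them. Covering $V(G)$ then needs at least four components, since each $w_i$ must lie in some component of the cover. This is the same certificate used in the introduction, where $x_1,x_2,y$ are pairwise separated in $K_{n,n}$ minus two edges. I will use a fourth witness and spread the deleted edges over blocks of linear size, so that the minimum degree stays near $2n/3$.

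First, the construction. Split $X=X_1\cup X_2\cup X_3$ and $Y=Y_1\cup Y_2\cup Y_3$ into blocks of sizes within one of $n/3$, chosen according to $n \bmod 3$. Put a small number of distinguished vertices in some of these blocks, and delete a bounded number of edges at those vertices, so that the exact value $\delta(G)=\lfloor (2n+1)/3\rfloor -1$ is attained. The edge set is that of $K_{n,n}$ minus a structured family of non-edges between some pairs $(X_i,Y_j)$. Each remaining block $G[X_i,Y_j]$ is coloured monochromatically, except at the distinguished vertices, which get their own colour pattern in the spirit of the introduction's example, where all edges at $y$ and $x_1$ are red. Summarising the colouring by a "type graph" on the six blocks, with each block pair labelled red, blue or absent, the monochromatic components of $G$ are unions of blocks read off from this type graph.

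Second, the lower bound. For each colour I will list the components via the type graph. I then check that the four witnesses, say two distinguished vertices of $X$ and two of $Y$, lie in pairwise distinct red components and pairwise distinct blue components. This reduces to a finite check on the type graph, together with a check that no edge at a distinguished vertex merges two of these components. Third, the degree count: each vertex misses exactly one block of size about $n/3$, plus possibly a few special edges. This gives degree $n-\lceil n/3\rceil$ or so, and I will tune the block sizes for $n\equiv 0,1,2 \pmod 3$ to hit $\lfloor (2n+1)/3\rfloor-1$ exactly. Small $n$, for instance $n=2,3$, are handled separately. For $n=2$ the target is $\delta=0$, so an isolated vertex together with a suitable colouring of the rest suffices.

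The main obstacle is designing the type graph. My first naive attempts fail. The "$C_6$" pattern, where $X_i$ is non-adjacent to $Y_i$ and the six remaining blocks form three red and three blue matchings, is covered by three components. Insisting that every component is a single pair of blocks forces the degree down to $n/2$. So the colouring must be mixed: blocks are large enough that components merge several parts, while a few distinguished vertices with deliberately missing edges keep four witnesses separated. I would first try to place the four witnesses so that any two of them are joined only through a single block that receives opposite colours on the two sides. I would then verify the separation claim by exhaustively listing components. If that fails, I would return to finding the right $n \bmod 3$ parametrisation, since the floor in the bound suggests that the extremal example depends on the residue class.
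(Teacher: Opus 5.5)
\textbf{There is a genuine gap: your proposal never specifies a graph or a colouring.} This lemma is proved by exhibiting one explicit example. Everything after your choice of certificate is a search plan: a type graph still to be designed, witnesses still to be placed, block sizes still to be tuned, and a fallback in case the first attempt fails. So neither the pairwise separation of four witnesses nor the exact value $\delta(G)=\lfloor(2n+1)/3\rfloor-1$ is verified for any graph. The certificate itself (four vertices, no two in a common monochromatic component) is the right one, and it is exactly what the paper uses, but it is the easy part.

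The one small case you commit to is also wrong. For $n=2$, if $G$ has any edge, the monochromatic component containing that edge covers two vertices, and the other two vertices need at most two more components, so $\tc_2(G)\le 3$. Hence ``one isolated vertex plus a suitable colouring of the rest'' fails; only the edgeless graph works.

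\textbf{The missing idea is the concrete design, and one of your heuristics points away from it.} You discard the $C_6$ pattern ($X_i$ non-adjacent to $Y_i$). The paper keeps exactly this non-adjacency pattern; only your alternating, matching colouring of it fails.

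\emph{Block colouring.} Instead, each colour forms a path in the block graph: red on $X_1Y_2$, $X_1Y_3$, $X_2Y_3$, and blue on $X_2Y_1$, $X_3Y_1$, $X_3Y_2$. Then $X_1$ and $Y_3$ meet no blue block edges, and $X_3$ and $Y_1$ meet no red block edges.

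\emph{Witnesses.} The witnesses are four pairwise non-adjacent vertices outside the blocks:
\begin{itemize}
\item $x_{\red}$, joined in red to $Y_2\cup Y_3$;
\item $y_{\blue}$, joined in blue to $X_2\cup X_3$;
\item $x_{\mix}$, joined in red to $Y_1$ and in blue to $Y_3$;
\item $y_{\mix}$, joined in blue to $X_1$ and in red to $X_3$.
\end{itemize}
Their red components are $\{x_{\mix}\}\cup Y_1$, $\{x_{\red}\}\cup X_1\cup X_2\cup Y_2\cup Y_3$, $\{y_{\mix}\}\cup X_3$ and $\{y_{\blue}\}$. Their blue components are $\{x_{\mix}\}\cup Y_3$, $\{x_{\red}\}$, $\{y_{\mix}\}\cup X_1$ and $\{y_{\blue}\}\cup X_2\cup X_3\cup Y_1\cup Y_2$. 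So the four witnesses are pairwise separated in both colours.

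\emph{Degrees.} Take $|X_1|=|X_2|=|Y_1|=|Y_2|=\lfloor(n-2)/3\rfloor$ and put the remaining non-witness vertices into $X_3$ and $Y_3$. The minimum degree is then $n-2-\lfloor(n-2)/3\rfloor=\lfloor(2n+1)/3\rfloor-1$. It is attained at the witnesses, each of which misses one block of size $\lfloor(n-2)/3\rfloor$ and the two witnesses on the other side.

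This single construction works uniformly for all $n\ge 2$, with empty blocks allowed, so no separate small cases are needed. For $n=2$ it reduces to the edgeless graph.
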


\begin{proof}
We construct a graph $G$ with minimum degree $\delta(G)=\lfloor(2n+1)/3\rfloor
-1$ and give a $2$-edge-colouring for which at least four monochromatic
components are needed to cover~$V(G)$.
Let $X$ and $Y$ be disjoint sets with $|X|=|Y|=n$ and choose partitions
\[
X=X_1\mathbin{\dot\cup}X_2\mathbin{\dot\cup}X_3
  \mathbin{\dot\cup}\{x_{\mix},x_{\red}\}
\quad\text{and}\quad
Y=Y_1\mathbin{\dot\cup}Y_2\mathbin{\dot\cup}Y_3
  \mathbin{\dot\cup}\{y_{\mix},y_{\blue}\},
\]
such that the sizes of the given sets are as below.
\[
|X_1|=|X_2|=|Y_1|=|Y_2|
 =\left\lfloor\frac{n-2}{3}\right\rfloor,
\qquad\text{and}\qquad
|X_3|=|Y_3|
 =n-2-2\left\lfloor\frac{n-2}{3}\right\rfloor.
\]
Let $G=(X,Y;E)$. The edge set $E$ consists precisely of all edges between
$X_1$ and $Y_2\cup Y_3$, between $X_2$ and $Y_1\cup Y_3$, and between $X_3$
and $Y_1\cup Y_2$, together with all edges between $x_{\mix}$ and
$Y_1\cup Y_3$, between $x_{\red}$ and $Y_2\cup Y_3$, between $y_{\mix}$ and
$X_1\cup X_3$, and between $y_{\blue}$ and $X_2\cup X_3$.

Give colour blue to all edges incident with $y_{\blue}$, all edges between
$y_{\mix}$ and $X_1$, all edges between $x_{\mix}$ and $Y_3$, and all edges
between $X_2\cup X_3$ and $Y_1\cup Y_2$; colour every remaining edge red.
Figure~\ref{fig:G_lower_bound} depicts this construction.
\begin{figure}[htbp]
    \centering
    \usetikzlibrary{calc}
\tikzset{every picture/.style={line width=0.75pt}}
\begin{tikzpicture}[x=1pt,y=1pt,yscale=-1,xscale=1]

\definecolor{myblue}{rgb}{0.29, 0.56, 0.89}
\definecolor{myred}{rgb}{0.82, 0.01, 0.11}

\coordinate (GL1) at (50, 50);
\coordinate (GL2) at (50, 110);
\coordinate (GL3) at (50, 170);
\coordinate (GR1) at (125, 50);
\coordinate (GR2) at (125, 110);
\coordinate (GR3) at (125, 170);
\coordinate (GLtop) at (0, 80);
\coordinate (GLbot) at (0, 140);
\coordinate (GRtop) at (175, 80);
\coordinate (GRbot) at (175, 140);

\newcommand{\LBedgeBand}[3]{%
  \path[draw=#1, line width=0.1pt, fill=#1, fill opacity=0.15,
        line join=round]
      ($(#2)!6pt!90:(#3)$) --
    ($(#3)!6pt!-90:(#2)$) --
    ($(#3)!6pt!90:(#2)$) --
    ($(#2)!6pt!-90:(#3)$) -- cycle;
}
\newcommand{\smallBedgeBand}[3]{%
  \path[draw=#1, line width=0.1pt, fill=#1, fill opacity=0.15,
        line join=round]
    ($(#2)!0pt!90:(#3)$) --
    ($(#3)!21pt!-70:(#2)$) --
    ($(#3)!21pt!70:(#2)$) --
    ($(#2)!0pt!-90:(#3)$) -- cycle;
}

\LBedgeBand{myred}{GL1}{GR1}
\LBedgeBand{myred}{GL1}{GR2}

\LBedgeBand{myred}{GL2}{GR1}
\LBedgeBand{myblue}{GL2}{GR3}

\LBedgeBand{myblue}{GL3}{GR2}
\LBedgeBand{myblue}{GL3}{GR3}

\smallBedgeBand{myblue}{GLtop}{GL1}
\smallBedgeBand{myred}{GLtop}{GL3}

\smallBedgeBand{myblue}{GLbot}{GL2}
\smallBedgeBand{myblue}{GLbot}{GL3}

\smallBedgeBand{myred}{GRtop}{GR1}
\smallBedgeBand{myred}{GRtop}{GR2}

\smallBedgeBand{myblue}{GRbot}{GR1}
\smallBedgeBand{myred}{GRbot}{GR3}

\node[circle, draw=black, fill=white, text=black, minimum size=42pt,
      inner sep=0pt, font=\large] at (GL1)
      {$X_1$};
\node[circle, draw=black, fill=white, text=black, minimum size=42pt,
      inner sep=0pt, font=\large] at (GL2)
      {$X_2$};
\node[circle, draw=black, fill=white, text=black, minimum size=42pt,
      inner sep=0pt, font=\large] at (GL3)
      {$X_3$};

\node[circle, draw=black, fill=white, text=black, minimum size=42pt,
      inner sep=0pt, font=\large] at (GR1)
      {$Y_3$};
\node[circle, draw=black, fill=white, text=black, minimum size=42pt,
      inner sep=0pt, font=\large] at (GR2)
      {$Y_2$};
\node[circle, draw=black, fill=white, text=black, minimum size=42pt,
      inner sep=0pt, font=\large] at (GR3)
      {$Y_1$};

\node[circle, fill=black, inner sep=1.5pt, label=left:{$y_{\mix}$}]
  at (GLtop) {};
\node[circle, fill=black, inner sep=1.5pt, label=left:{$y_{\blue}$}]
  at (GLbot) {};
\node[circle, fill=black, inner sep=1.5pt, label=right:{$x_{\red}$}]
  at (GRtop) {};
\node[circle, fill=black, inner sep=1.5pt, label=right:{$x_{\mix}$}]
  at (GRbot) {};

\end{tikzpicture}
    \caption{The lower-bound construction of a $2$-edge-coloured spanning
    subgraph $G$ of $K_{n,n}$ with $\delta(G)=\lfloor(2n+1)/3\rfloor-1$ and
    $\tc_2(G)\ge4$.}
	\label{fig:G_lower_bound}
\end{figure}

Now let us check that $\delta(G) = \lfloor (2n+1)/3 \rfloor - 1$. Put
$a:=\left\lfloor\frac{n-2}{3}\right\rfloor$ and $b:=n-2-2a$. Note that each of
the four special vertices $x_{\mix}$, $x_{\red}$, $y_{\mix}$ and $y_{\blue}$ has
degree $a+b$, every vertex in $X_1\cup X_2\cup Y_1\cup Y_2$ has degree $a+b+1$,
whereas every vertex in $X_3\cup Y_3$ has degree $2a+2$. Since $a\leq b \leq
a+2$, we have $2a+2\geq a+b$. Therefore, the minimum degree is attained by all
four special vertices; if $n\equiv1\pmod 3$, it is also attained by every
vertex in $X_3\cup Y_3$. Consequently,
\[
\delta(G)=a+b
=n-2-\left\lfloor\frac{n-2}{3}\right\rfloor
=\left\lfloor\frac{2n+1}{3}\right\rfloor-1.
\]
Finally, the red components containing $x_{\mix}$, $x_{\red}$, $y_{\mix}$,
and $y_{\blue}$, respectively, have vertex sets
\[
\{x_{\mix}\}\cup Y_1,\qquad
\{x_{\red}\}\cup X_1\cup X_2\cup Y_2\cup Y_3,\qquad
\{y_{\mix}\}\cup X_3,\qquad
\{y_{\blue}\},
\]
whereas the corresponding blue components have vertex sets
\[
\{x_{\mix}\}\cup Y_3,\qquad
\{x_{\red}\},\qquad
\{y_{\mix}\}\cup X_1,\qquad
\{y_{\blue}\}\cup X_2\cup X_3\cup Y_1\cup Y_2.
\]
In each colour these four components are distinct. Hence no monochromatic
component contains two special vertices, so every monochromatic cover of
$V(G)$ uses at least four components.
\end{proof}

\section{Covering with three monochromatic components}

In view of Lemma~\ref{lem:lower}, it remains to prove the following upper bound.

\begin{lemma}\label{lem:upper_bound}
Let $n$ be a positive integer, and let $G=(X,Y;E)$ be a bipartite graph with
$|X|=|Y|=n$. If $\delta(G)\ge \lfloor(2n+1)/3\rfloor$, then $\tc_2(G)\le 3$.
\end{lemma}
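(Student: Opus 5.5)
We argue by contradiction on a fixed red/blue colouring of $G$, writing $d:=\lfloor(2n+1)/3\rfloor$. The degree condition $3d\ge 2n-1$ is what we will use. Its key consequence is a triple-intersection fact: any three vertices on the same side have neighbourhoods in the other side of total size at least $3d\ge 2n-1$. In particular, any two vertices on the same side have at least $2d-n\ge (n-2)/3>0$ common neighbours once $n\ge 3$; the small cases $n\le 2$ we check by hand. Consequently any two vertices of $X$ are joined by a path of length two, and likewise in $Y$.

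\textbf{First step: structure around one vertex.} Fix $x\in X$ and split $N(x)=R\cup B$ into its red and blue neighbours. Let $C_R$ and $C_B$ be the red component and the blue component of $x$.
\begin{enumerate}[(i)]
\item Every $x'\in X$ has a common neighbour $y$ with $x$. The edge $x'y$ is red or blue, so $x'$ lies in the red component of some vertex of $R$ or the blue component of some vertex of $B$.
\item If the edge $x'y$ has the same colour as $xy$, then $x'\in C_R\cup C_B$.
\end{enumerate}
So the vertices of $X$ outside $C_R\cup C_B$ are exactly those all of whose common neighbours with $x$ are reached through a ``colour switch''. We then count such switches using the degree bound, and aim for the following dichotomy. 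Either $C_R\cup C_B$ together with one further monochromatic component covers $V(G)$, or the colouring is forced into a rigid block structure. In the block structure, $X$ and $Y$ each split into three parts with an almost ``anti-diagonal'' pattern of dense pairs, resembling the extremal example of Lemma~\ref{lem:lower}.

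\textbf{Second step: the standard $K_{n,n}$ case analysis.} Transplant the usual argument for $2$-coloured complete bipartite graphs. Take a largest monochromatic component, say red, with $R_X=C\cap X$ and $R_Y=C\cap Y$. No red edges join $R_X$ to $Y\setminus R_Y$, nor $X\setminus R_X$ to $R_Y$. Hence all edges of $G$ between $R_X$ and $Y\setminus R_Y$ are blue, and all edges between $X\setminus R_X$ and $R_Y$ are blue.

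We then ask whether these blue edges, together with the blue edges inside $(X\setminus R_X)\times(Y\setminus R_Y)$, form at most two blue components covering $V\setminus C$. The degree condition controls this as follows. A vertex of $X\setminus R_X$ with fewer than $d-|Y\setminus R_Y|$ neighbours in $R_Y$ is impossible, so each vertex outside $C$ has many blue edges into $C$. We distinguish cases by the sizes $|R_X|$ and $|R_Y|$ relative to $n/3$ and $2n/3$. The thresholds $n/3$ and $2n/3$ are exactly where the bound $3d\ge 2n-1$ makes the required common neighbourhoods nonempty. In each case we show that one blue component, or two blue components, covers everything outside $C$. This gives at most three components in total.

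\textbf{Main obstacle.} The hardest case is the balanced one, where the largest component is small, about $2n/3$ vertices on each side. Here the blue graph outside $C$ could split into three pieces. To rule this out, pick three vertices in distinct blue pieces on the same side. Their neighbourhoods total at least $2n-1$ in an $n$-set, so two of them share a neighbour, or all three are forced into a near-partition of the other side. This near-partition is the configuration of Lemma~\ref{lem:lower}. We must then show that when $\delta$ equals $d$ rather than $d-1$, the extra degree creates one additional edge. That edge merges two of the four would-be components, in red or in blue, whichever colour it has. I expect this exact-extremal analysis, including the parity effects of the floor, to need the most careful bookkeeping.
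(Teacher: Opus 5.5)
\textbf{There is a genuine gap.} Most of the proposal is a plan. The one concrete structural claim it relies on is false at exactly this degree. That claim is: for a largest monochromatic component $C$ (say red), $V(G)\setminus C$ is covered by at most two blue components.

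Here is a counterexample. Let $n=3k$ and split $X=X_1\cup X_2\cup X_3$ and $Y=Y_1\cup Y_2\cup Y_3$ into parts of size $k$. Let $G$ be the blow-up of the $6$-cycle with parts in cyclic order $X_1,Y_2,X_3,Y_1,X_2,Y_3$. Each part is completely joined to its two cyclic neighbours and to nothing else, so every degree is $2k=\lfloor(2n+1)/3\rfloor$. Colour the pairs $(X_1,Y_2)$, $(X_3,Y_1)$, $(X_2,Y_3)$ red and the pairs $(Y_2,X_3)$, $(Y_1,X_2)$, $(Y_3,X_1)$ blue. Each colour class is then three vertex-disjoint copies of $K_{k,k}$. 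So there are six monochromatic components, all of the same size, and each is the union of two consecutive parts. A cover by three components must be a perfect matching of this $6$-cycle of parts. Hence the only $3$-covers are the three red components or the three blue components. In particular, for every choice of largest component $C$, in either colour, $V(G)\setminus C$ needs three components of the other colour. For $k=1$ this is just an alternately coloured $C_6$. Your $K_{n,n}$ transplant fails because all edges between $R_X$ and $Y\setminus R_Y$ need not exist: here $X_1$ sees $Y_3$ but not $Y_1$. Note also that the largest components have only $n/3$ vertices per side, not the roughly $2n/3$ you expect in the hardest case. So the template ``largest component plus two components of the other colour'' cannot be proved by any case analysis on $|R_X|,|R_Y|$.

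The other steps do not close the hole either.
\begin{enumerate}[(i)]
\item In your ``main obstacle'' step, two vertices sharing a neighbour need not lie in the same blue component, because the two edges to that neighbour may have different colours. Handling such colour switches is the real difficulty; compare the choice of $u,v$ with no common red neighbour in the proof of Proposition~\ref{prop:7}.
\item In the example above, the three blue pieces outside $C$ cannot be represented by three vertices on one side: one piece meets only $X$, one only $Y$, one both. So your pigeonhole argument never starts.
\item The dichotomy in your first step is only announced, and item (i) there is misstated. If $y\in B$ and $x'y$ is red, then $x'$ lies in the red component of a vertex of $B$, not of $R$.
\end{enumerate}

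What the paper uses instead is flexibility in the shape of the final cover.
\begin{enumerate}[(i)]
\item Proposition~\ref{prop:5}: two components covering one side suffice, since $\delta>n/2$.
\item Proposition~\ref{prop:7}: it also suffices to have a component with $\delta^*(n)$ vertices on one side, or three components of one colour covering a side.
\item Footprint bound: a red-dominant vertex's red component has at least $\lceil\delta^*(n)/2\rceil$ vertices in $Y$. By Fact~\ref{fact:simple}, if three red components contained red-dominant vertices of $X$ they would cover $Y$. So at most two red components, and likewise at most two blue, contain the dominant vertices of $X$.
\end{enumerate}
The resulting skeleton $R_1,R_2,B_1,B_2$ is pushed to the crossed configuration and refuted by counting. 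Your sketch has no substitute for these stopping criteria or for the footprint bound.
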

For brevity, define
\[
\delta^*(n):=\left\lfloor\frac{2n+1}{3}\right\rfloor.
\]
A direct calculation according to the residue of $n$ modulo $3$ gives the
following fact.
\begin{fact}\label{fact:simple}
For any positive integer $n$, we have 
\[
\left\lceil\frac{\delta^*(n)}{2}\right\rceil+\delta^*(n)\ge n,
\qquad
3\left\lceil\frac{\delta^*(n)}{2}\right\rceil\ge n,
\qquad
3\left(\left\lfloor\frac{\delta^*(n)}{2}\right\rfloor+1\right)>n.
\]
\end{fact}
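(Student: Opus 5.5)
The plan is to verify all three inequalities directly, splitting into cases by the residue of $n$ modulo $3$. In each case I will first compute $\delta^*(n)$ explicitly, then $\lceil\delta^*(n)/2\rceil$ and $\lfloor\delta^*(n)/2\rfloor$, and finally compare each left-hand side with $n$. Write $n=3k+r$ with $r\in\{0,1,2\}$ and $k\ge 0$ an integer, where $k\ge 1$ when $r=0$ since $n$ is positive.

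\emph{Case $r=0$.} Here $\delta^*(n)=\lfloor(6k+1)/3\rfloor=2k$. This is even, so $\lceil\delta^*(n)/2\rceil=\lfloor\delta^*(n)/2\rfloor=k$. The three inequalities then read
\[
k+2k=3k\ge n,\qquad 3k\ge n,\qquad 3(k+1)=3k+3>n,
\]
and all three hold.

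\emph{Cases $r=1$ and $r=2$.} We get $\delta^*(n)=\lfloor(6k+3)/3\rfloor=2k+1$ and $\delta^*(n)=\lfloor(6k+5)/3\rfloor=2k+1$ respectively. In both cases $\delta^*(n)$ is odd, so $\lceil\delta^*(n)/2\rceil=k+1$ and $\lfloor\delta^*(n)/2\rfloor=k$. The inequalities become
\[
(k+1)+(2k+1)=3k+2\ge n,\qquad 3(k+1)=3k+3\ge n,\qquad 3(k+1)=3k+3>n.
\]
All three hold because $n\le 3k+2$.

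There is no genuine obstacle here. The only point needing care is that the inequalities are tight: when $r=2$, the first and third hold with slack only $0$ and $1$, and when $r=0$, the first two hold with equality. So the floors and ceilings must be evaluated exactly, with no estimates.
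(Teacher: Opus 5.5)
Your case analysis is correct: $\delta^*(n)=2k$ for $n=3k$ and $\delta^*(n)=2k+1$ for $n\in\{3k+1,3k+2\}$, and the resulting floors and ceilings make all three inequalities check out. This is exactly the ``direct calculation according to the residue of $n$ modulo $3$'' that the paper invokes, so nothing is missing.
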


Given a red--blue colouring of $E(G)$ and a vertex $v\in V(G)$, let $N_R(v)$
and $N_B(v)$ be the red and blue neighbourhoods of $v$, respectively, and put
$d_R(v):=|N_R(v)|$ and $d_B(v):=|N_B(v)|$. For $W\subseteq V(G)$, let
$N_R(v,W):=N_R(v)\cap W$ and $d_R(v,W):=|N_R(v,W)|$, and define the analogous
blue notation. We identify each component with its vertex set.

A vertex is \emph{red-dominant} if it is incident to at least as many red edges
as blue edges. Furthermore, we say a vertex $v$ is a \emph{red neighbour} of $u$ if
they are connected by a red edge. The terms \emph{blue-dominant} and
\emph{blue neighbour} are defined analogously. 

\subsection{Overview of the proof}

The proof is organised as a sequence of reductions. We first establish a few
configurations from which a cover with at most three monochromatic components
follows quickly. Proposition~\ref{prop:5} shows that it is enough to cover one
side of the bipartition with at most two monochromatic components and
Proposition~\ref{prop:7} gives two further stopping criteria:
item~\ref{prop:7-item1} applies when one monochromatic component contains at
least $\delta^*(n)$ vertices on one side, while item~\ref{prop:7-item2} applies
when at most three components of the same colour cover one side.

We then classify the vertices of $X$ according to their dominant colour. If
three distinct red components contain red-dominant vertices, each of them has a
large `footprint' on $Y$, and the three footprints already cover $Y$; the same
holds in blue. We may therefore focus on at most two red components $R_1$ and
$R_2$ containing all red-dominant vertices of $X$ and at most two blue
components $B_1$ and $B_2$ containing all blue-dominant vertices of $X$.
Together, these four components cover $X$ and form the skeleton of the argument.
We record which vertices belong to the selected red components but not to the
selected blue components (the ``red-only'' sets $X_{\R},Y_{\R}$), which belong
to the selected blue components but not to the selected red components (the
``blue-only'' sets $X_{\B},Y_{\B}$), and which vertices of $Y$ lie outside all
four components (the set $Y_{\out}$).

The proof splits according to whether each colour really needs two components in
this skeleton. If one colour needs fewer than two components, then all immediate
cases reduce to our stopping criteria, and the only configuration left has one
red component and two blue components (up to exchanging the colours). Two
suitably chosen vertices in the distinct blue components give an upper bound on
$|R_1\cap Y|+|Y_{\out}|$, while a red-only vertex gives a lower bound differing
from it by at most one. The exact value of $\delta^*(n)$ leaves only extremal
equality cases (including the one-unit exceptional case when $n\equiv2\pmod 3$).
In those cases the forced adjacencies either place all of $Y_{\out}$ in one
further blue component or make three red components cover $Y$. Either outcome
is one of our stopping criteria.

It remains to consider the case in which both colours need exactly two
components. Claims~\ref{claim:9}--\ref{claim:11} successively remove every less
rigid arrangement. They show that none of the four red-only and blue-only sets
can be empty, that such a set cannot be spread over both selected components of
its colour, and that the red-only sets on $X$ and $Y$ must lie in opposite red
components; the analogous statement holds in blue. Thus, after relabelling, the
only possible obstruction has a crossed form: $X_{\R}$ lies in $R_1$ while
$Y_{\R}$ lies in $R_2$, and $X_{\B}$ lies in $B_1$ while $Y_{\B}$ lies in $B_2$.
Common-neighbour paths also show that any vertex of $Y_{\out}$ would immediately
yield a cover by three components, so the alleged obstruction has
$Y_{\out}=\emptyset$.

This crossed configuration is too rigid to satisfy the minimum-degree condition.
Unless the proof has already finished, we can choose two vertices of $X_{\R}$
with disjoint blue neighbourhoods in $Y_{\R}$ and, symmetrically, two vertices
of $X_{\B}$ with disjoint red neighbourhoods in $Y_{\B}$. The degree
requirements for these four vertices, together with the large footprints
supplied by dominant vertices in $R_2$ and $B_2$, demand more room in $Y$ than
is available. More precisely, their combined bounds force one of $Y_{\R}$ and
$Y_{\B}$ to be empty, contradicting the structure forced by the three claims.
Hence the final obstruction cannot occur, and one of the stopping criteria must
always apply. This proves that three monochromatic components cover $V(G)$.

Before proving Lemma~\ref{lem:upper_bound}, we state and prove some auxiliary
results that will provide the structure for our proof.

\subsection{Structuring the covering}

For a fixed $2$-edge-colouring $\varphi$ of $E(G)$, let $\tc_2(G,\varphi)$
denote the least number of monochromatic components in the colouring $\varphi$
whose vertex sets cover $V(G)$. Thus, $\tc_2(G)=\max\{\tc_2(G,\varphi) :
\varphi\text{ is a $2$-edge-colouring of $G$}\}$. We begin with a simple observation.

\begin{fact}\label{fact:auxiliary}
    Let $G$ be a $2$-edge-coloured graph, and let $S\subseteq V(G)$ be such that any two vertices in $S$ are connected by a monochromatic path. Then $S$ is covered by a single monochromatic component. 
\end{fact}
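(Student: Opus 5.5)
The plan is a short case split on whether the red colour alone already links every pair of vertices of $S$. Throughout, "$u$ and $v$ are connected by a red path" means that $u$ and $v$ lie in the same red component. This relation is an equivalence relation on $V(G)$: reflexive via the trivial path, and transitive because red components partition $V(G)$. The same holds in blue.

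\emph{Case 1: every two vertices of $S$ are joined by a red path.} Fix any $s\in S$. Every $w\in S$ lies in the red component of $s$, so that single red component covers $S$. (If $S=\emptyset$ or $|S|=1$ there is nothing to prove.)

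\emph{Case 2: there are $u,v\in S$ not joined by a red path.} By hypothesis $u$ and $v$ are joined by a monochromatic path, which must therefore be blue. Hence $u$ and $v$ lie in a common blue component $B$. We show that every $w\in S$ lies in $B$.
\begin{itemize}
\item Suppose $w$ is not joined to $u$ by a red path. The hypothesis applied to the pair $w,u$ forces a blue path from $w$ to $u$, so $w\in B$.
\item Suppose instead $w$ is joined to $u$ by a red path. If $w$ were also joined to $v$ by a red path, transitivity of the red relation would join $u$ and $v$ by a red path, which is a contradiction. So the hypothesis applied to $w,v$ gives a blue path from $w$ to $v$, and again $w\in B$.
\end{itemize}
Thus $S\subseteq B$, as required.

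There is no genuine obstacle here. The only point needing care is to use transitivity of "same red component" rather than the existence of a single red path through several vertices. This transitivity is immediate because components partition the vertex set.
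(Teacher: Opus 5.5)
Your proof is correct and is essentially the paper's argument. The paper builds an auxiliary $2$-edge-coloured complete graph on $S$ and invokes the fact that every such graph has a monochromatic spanning tree; you instead prove that fact directly (if not all of $S$ lies in one red class, every vertex is blue-linked to one of two vertices $u,v$ from different red classes), and working with the equivalence relation ``same red component'' makes explicit the transitivity step the paper leaves implicit when lifting the spanning tree back to $G$.
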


\begin{proof}
Consider the following auxiliary $2$-edge-coloured complete graph on $S$: the colour of an edge between two vertices coincides with the colour of a monochromatic path between them in $G$; we choose arbitrarily if both colours are available. The statement follows from the fact that every $2$-edge-coloured complete graph has a monochromatic spanning tree. 
\end{proof}

We continue with a useful
criterion for bounding this fixed-colouring covering number.

\begin{proposition}\label{prop:5}
Let $n$ be a positive integer, and let $G=(X,Y;E)$ be a bipartite graph with
$|X|=|Y|=n$ such that $\delta(G)\ge \lfloor n/2\rfloor+1$. Let $\varphi$ be a
$2$-edge-colouring of $E(G)$. If at most two monochromatic components cover $X$ or
cover $Y$, then $\tc_2(G,\varphi)\le 3$.

\end{proposition}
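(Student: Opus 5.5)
We will prove the statement when $X$ is covered by at most two monochromatic components. The case of $Y$ is identical with the roles of $X$ and $Y$ exchanged, since the hypotheses are symmetric in the two sides. So let $C_1,C_2$ be monochromatic components whose union contains $X$; if one component suffices, take $C_1=C_2$. Put $Y':=Y\setminus(C_1\cup C_2)$. The plan is to show that $Y'$ is covered by a single monochromatic component $C_3$. Then $C_1,C_2,C_3$ cover $V(G)$ and $\tc_2(G,\varphi)\le 3$. If $|Y'|\le 1$ there is nothing to prove, because a single vertex lies in some monochromatic component (possibly trivial).

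The key observation concerns edges between $X$ and $Y'$. Let $x\in X$ and $y\in Y'$ with $xy\in E(G)$. The vertex $x$ lies in some $C_i$, say of colour $c$. If $xy$ had colour $c$, then $y$ would lie in $C_i$, contradicting $y\in Y'$. Hence every edge from $x$ to $Y'$ has the colour opposite to that of $C_i$. In particular, all edges from a fixed $x$ to $Y'$ receive the same colour.

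Next we use the degree condition. For distinct $y,y'\in Y'$ we have
\[
d(y)+d(y')\ge 2\left(\left\lfloor n/2\right\rfloor+1\right)>n=|X|,
\]
so $y$ and $y'$ have a common neighbour $x\in X$. By the observation above, the edges $xy$ and $xy'$ have the same colour, so $y\,x\,y'$ is a monochromatic path. Thus any two vertices of $Y'$ are joined by a monochromatic path. Fact~\ref{fact:auxiliary} then gives a single monochromatic component $C_3$ covering $Y'$, which completes the argument.

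The only step needing care is the colour observation: it must hold for whichever $C_i$ contains $x$, and it does, including when $x$ lies in both components. In that case $C_1$ and $C_2$ must have opposite colours, since two distinct components of the same colour are disjoint. Then $x$ can have no edge to $Y'$ at all, because either colour would place $y$ in $C_1$ or $C_2$. So such an $x$ is never a common neighbour, and the counting argument still produces one of the required colour-consistent common neighbours, as it only uses vertices $x$ that are actually adjacent to both $y$ and $y'$.
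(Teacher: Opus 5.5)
Your proof is correct and is essentially the paper's argument. The paper handles two cases separately. When two components of the same colour cover $X$, every edge from $Y\setminus(R_1\cup R_2)$ to $X$ is blue, so common neighbours are common blue neighbours. When one red and one blue component cover $X$, a common neighbour gives a monochromatic path of length two, and Fact~\ref{fact:auxiliary} finishes. Your observation that each $x\in X$ sends edges of only one colour to $Y'$ merges the two cases into one; this streamlines the argument without changing the route.
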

\begin{proof}

First, let us consider the case where there are at most two monochromatic components of
the same colour covering one side of the bipartition of $G$. Without
loss of generality, let $R_1$ and $R_2$ be two red components covering $X$, one of which may be empty.
If $Y \setminus (R_1 \cup R_2) = \emptyset$, then
$\tc_2(G,\varphi) \le 2$.
Otherwise, every vertex in $Y \setminus (R_1 \cup R_2)$ has only blue neighbours
in $X$, as they are in neither $R_1$ nor $R_2$. Since $\delta(G) \ge \lfloor
\frac{n}{2}\rfloor + 1$, any two vertices in $Y \setminus (R_1 \cup R_2)$ have a
common blue neighbour in $X$. Therefore, there is a blue component $B_1$
covering $Y \setminus (R_1 \cup R_2)$. This, together with $R_1$ and $R_2$,
yields three monochromatic components covering $V(G)$, hence
$\tc_2(G,\varphi) \le 3$.
See Figure~\ref{fig:prop5_same}.
\begin{figure}[htbp]
    \centering
    \tikzset{every picture/.style={line width=0.75pt}}
\begin{tikzpicture}[x=1pt,y=1pt,yscale=-1,xscale=1]

\definecolor{myred}{rgb}{0.82, 0.01, 0.11}
\definecolor{myblue}{rgb}{0.29, 0.56, 0.89}
\definecolor{mypink}{rgb}{0.96, 0.76, 0.85}
\definecolor{mycyan}{rgb}{0.80, 0.95, 0.98}

\node at (50, 10) {$X$};
\node at (100, 10) {$Y$};
\draw[thick] (75, 20) -- (75, 135);

\draw[myred, thick, rounded corners=10pt, fill=mypink, fill opacity=0.3] (30, 30) rectangle (120, 60);
\draw[myred, thick, fill=mypink, fill opacity=0.3, rounded corners=10pt] 
  (120, 100) -- (70, 100) -- (70, 135) -- (30, 135) -- (30, 70) -- (70, 70)  -- (120, 70) -- cycle;

\node[myred] at (20, 45) {$R_1$};
\node[myred] at (20, 85) {$R_2$};

  \node[circle, fill=myblue, inner sep=1.5pt] at (100, 109.5) {};
  \node[myblue] at (100, 116) {$\vdots$};
  \node[circle, fill=myblue, inner sep=1.5pt] at (100, 128.5) {};

\begin{scope}[xshift=200pt]
\node at (50, 10) {$X$};
\node at (100, 10) {$Y$};
\draw[thick] (75, 20) -- (75, 120);

\draw[myred, thick, rounded corners=10pt, fill=mypink, fill opacity=0.3] (30, 30) rectangle (120, 60);
\draw[myred, thick, fill=mypink, fill opacity=0.3, rounded corners=10pt] 
  (120, 100) -- (70, 100) -- (70, 135) -- (30, 135) -- (30, 70) -- (70, 70)  -- (120, 70) -- cycle;

\node[myred] at (20, 45) {$R_1$};
\node[myred] at (20, 85) {$R_2$};

\draw[myblue, thick, fill=mycyan, fill opacity=0.3, rounded corners=10pt] 
  (120, 135) -- (35, 135) -- (35, 40) -- (70, 40) -- (70, 105) -- (120, 105) -- cycle;

\begin{scope}
  \clip[rounded corners=10pt] (120, 135) -- (35, 135) -- (35, 40) -- (70, 40) -- (70, 105) -- (120, 105) -- cycle;
  
  \begin{scope}
    \clip[rounded corners=10pt] (30, 30) rectangle (120, 60);
    \begin{pgfinterruptboundingbox}
      \fill[purple, opacity=0.4] (20, 20) rectangle (130, 140);
      \foreach \i in {-100, -95, ..., 140} {
        \draw[white, thick] (\i, 20) -- (\i+120, 140);
      }
    \end{pgfinterruptboundingbox}
  \end{scope}

  \begin{scope}
    \clip[rounded corners=8pt] (30, 70) rectangle (70, 135);
    \begin{pgfinterruptboundingbox}
      \fill[purple, opacity=0.4] (20, 20) rectangle (130, 140);
      \foreach \i in {-100, -95, ..., 140} {
        \draw[white, thick] (\i, 20) -- (\i+120, 140);
      }
    \end{pgfinterruptboundingbox}
  \end{scope}
  
  \node[circle, fill=myblue, inner sep=1.5pt] at (100, 109.5) {};
  \node[myblue] at (100, 116) {$\vdots$};
  \node[circle, fill=myblue, inner sep=1.5pt] at (100, 128.5) {};
\end{scope}

\node[myblue, anchor=west] at (120, 120) {$B_1$};
\end{scope}

\node[black] at (150, 120) {\small{$Y \setminus (R_1 \cup R_2)$}};

\draw[thick, ->] (165, 65) -- (195, 65);

\end{tikzpicture}
    \caption{Schematic representation for red components $R_1$ and $R_2$ covering~$X$.}
    \label{fig:prop5_same}
\end{figure}

Now suppose that a red component $R_1$ and a blue component $B_1$ together
cover $X$. Every vertex in $Y\setminus(R_1\cup B_1)$ has only blue neighbours
in $R_1$ and red neighbours in $B_1$. Because any two such vertices have a
common neighbour in $X$, they are joined by a red path of length two through
$B_1$ or a blue path of length two through $R_1$. By Fact~\ref{fact:auxiliary},
all vertices of $Y\setminus(R_1\cup B_1)$ belong to one monochromatic component
$C$, which together with $R_1$ and $B_1$ implies
$\tc_2(G,\varphi) \le 3$. See Figure~\ref{fig:prop5_distinct}.
\begin{center}
\begin{minipage}{\textwidth}
    \centering
    \captionsetup{hypcap=false}
    \tikzset{every picture/.style={line width=0.75pt}}
\begin{tikzpicture}[x=1pt,y=1pt,yscale=-1,xscale=1]

\definecolor{myred}{rgb}{0.82, 0.01, 0.11}
\definecolor{myblue}{rgb}{0.29, 0.56, 0.89}
\definecolor{mygreen}{rgb}{0.13, 0.55, 0.13}
\definecolor{mypink}{rgb}{0.96, 0.76, 0.85}
\definecolor{mycyan}{rgb}{0.80, 0.95, 0.98}

\node at (50, 10) {$X$};
\node at (100, 10) {$Y$};
\draw[thick] (75, 20) -- (75, 120);

\draw[myred, thick, rounded corners=10pt, fill=mypink, fill opacity=0.3] (30,35) -- (120,35) -- (120,70) -- (100,70) -- (90,50) -- (60,50) -- (50,70) -- (30,70) -- cycle;
\draw[myblue, thick, rounded corners=10pt, fill=mycyan, fill opacity=0.3] (30,95) -- (120,95) -- (120,60) -- (100,60) -- (90,80) -- (60,80) -- (50,60) -- (30,60) -- cycle;

\begin{scope}
  \clip[rounded corners=10pt] (30,35) -- (120,35) -- (120,70) -- (100,70) -- (90,50) -- (60,50) -- (50,70) -- (30,70) -- cycle;
  \clip[rounded corners=10pt] (30,95) -- (120,95) -- (120,60) -- (100,60) -- (90,80) -- (60,80) -- (50,60) -- (30,60) -- cycle;
  \begin{pgfinterruptboundingbox}
    \fill[purple, opacity=0.4] (20, 50) rectangle (130, 80);
    \foreach \i in {10, 15, ..., 140} {
      \draw[white, thick] (\i, 50) -- (\i+30, 80);
    }
  \end{pgfinterruptboundingbox}
\end{scope}

\node[myred] at (20, 45) {$R_1$};
\node[myblue] at (20, 85) {$B_1$};
\node[black] at (150, 120) {\small{$Y \setminus (R_1 \cup B_1)$}};

\node[circle, fill=black, inner sep=1.5pt] at (100, 110) {};
\node[black] at (100, 116) {$\vdots$};
\node[circle, fill=black, inner sep=1.5pt] at (100, 129) {};

\begin{scope}[xshift=200pt]
\node at (50, 10) {$X$};
\node at (100, 10) {$Y$};
\draw[thick] (75, 20) -- (75, 120);

\draw[myred, thick, fill=mypink, fill opacity=0.3, rounded corners=15pt] 
  (120, 135) -- (33, 135) -- (33, 75) -- (62, 75) -- (62, 105) -- (120, 105) -- cycle;

\begin{scope}
  \clip[rounded corners=15pt] (120, 135) -- (33, 135) -- (33, 75) -- (62, 75) -- (62, 105) -- (120, 105) -- cycle;
  \clip[rounded corners=10pt] (30,95) -- (120,95) -- (120,60) -- (100,60) -- (90,80) -- (60,80) -- (50,60) -- (30,60) -- cycle;
  \clip (20, 20) rectangle (75, 140);
  \begin{pgfinterruptboundingbox}
    \fill[purple, opacity=0.4] (20, 20) rectangle (130, 140);
    \foreach \i in {-100, -95, ..., 140} {
      \draw[white, thick] (\i, 20) -- (\i+120, 140);
    }
  \end{pgfinterruptboundingbox}
\end{scope}

\draw[myred, thick, rounded corners=10pt, fill=mypink, fill opacity=0.3] (30,35) -- (120,35) -- (120,70) -- (100,70) -- (90,50) -- (60,50) -- (50,70) -- (30,70) -- cycle;
\draw[myblue, thick, rounded corners=10pt, fill=mycyan, fill opacity=0.3] (30,95) -- (120,95) -- (120,60) -- (100,60) -- (90,80) -- (60,80) -- (50,60) -- (30,60) -- cycle;

\begin{scope}
  \clip[rounded corners=10pt] (30,35) -- (120,35) -- (120,70) -- (100,70) -- (90,50) -- (60,50) -- (50,70) -- (30,70) -- cycle;
  \clip[rounded corners=10pt] (30,95) -- (120,95) -- (120,60) -- (100,60) -- (90,80) -- (60,80) -- (50,60) -- (30,60) -- cycle;
  \begin{pgfinterruptboundingbox}
    \fill[purple, opacity=0.4] (20, 50) rectangle (130, 80);
    \foreach \i in {10, 15, ..., 140} {
      \draw[white, thick] (\i, 50) -- (\i+30, 80);
    }
  \end{pgfinterruptboundingbox}
\end{scope}

\node[myred] at (20, 45) {$R_1$};
\node[myblue] at (20, 85) {$B_1$};
\node[myred] at (127, 120) {$C$};

\node[circle, fill=black, inner sep=1.5pt] at (100, 110) {};
\node[black] at (100, 116) {$\vdots$};
\node[circle, fill=black, inner sep=1.5pt] at (100, 129) {};

\end{scope}

\draw[thick, ->] (165, 65) -- (195, 65);

\end{tikzpicture}

    \captionof{figure}{Schematic representation for components $R_1$ and $B_1$
    with distinct colours covering~$X$.}
    \label{fig:prop5_distinct}
\end{minipage}
\end{center}
\end{proof}

The preceding proposition handles configurations in which one side of the
bipartition is already covered by at most two monochromatic components. We now
establish two complementary stopping criteria tailored to the threshold
$\delta^*(n)$. They will allow us to conclude whenever either one component is
sufficiently large on one side or a small collection of components of the same
colour covers one side.
\begin{proposition}\label{prop:7}
Let $n$ be a positive integer, and let $G=(X,Y;E)$ be a bipartite graph with
$|X|=|Y|=n$ such that $\delta(G)\ge\delta^*(n)$. Let $\varphi$ be a
$2$-edge-colouring of $E(G)$. Then the following statements hold.
\begin{enumerate}[(i)]
\item If there is a monochromatic component that covers at least $\delta^*(n)$
vertices on one side of the bipartition of $G$, then $\tc_2(G,\varphi) \le 3$;
\label{prop:7-item1}
\item If there are at most three components of the same colour that cover one
side of the bipartition of $G$, then $\tc_2(G,\varphi) \le 3$.\label{prop:7-item2}
\end{enumerate}
\end{proposition}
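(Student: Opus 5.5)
The plan is to prove item~\ref{prop:7-item1} directly and then reduce item~\ref{prop:7-item2} to it. First, dispose of very small $n$ by hand; for $n=1,2$ one checks $\tc_2(G,\varphi)\le 3$ directly. For larger $n$ one has $\delta^*(n)\ge\lfloor n/2\rfloor+1$, so Proposition~\ref{prop:5} is available. Throughout I use the consequence $2\delta^*(n)>n$, so any two vertices on the same side have a common neighbour.

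\emph{Item~\ref{prop:7-item2} from item~\ref{prop:7-item1}.} Suppose red components $R_1,R_2,R_3$ cover $X$; the blue case and the case of $Y$ are symmetric. If $Y\subseteq R_1\cup R_2\cup R_3$, these three components cover $V(G)$ and we are done. Otherwise take $y\in Y\setminus(R_1\cup R_2\cup R_3)$. Every neighbour of $y$ lies in some $R_i$ while $y$ does not, so every edge at $y$ is blue. Hence the blue component of $y$ contains $N(y)$, which has at least $\delta^*(n)$ vertices of $X$. Item~\ref{prop:7-item1} then applies.

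\emph{Item~\ref{prop:7-item1}.} Let $C$ be, say, a red component with $r:=|C\cap X|\ge\delta^*(n)$, and put $k:=n-r\le n-\delta^*(n)$. By the first inequality of Fact~\ref{fact:simple}, $k\le\lceil\delta^*(n)/2\rceil$. Since $k<\delta^*(n)$, every $y\in Y$ has a neighbour in $C\cap X$. If $y\notin C$, all its edges into $C$ are blue, so $d_B(y,C\cap X)\ge\delta^*(n)-k$. The goal is to cover $Y\setminus C$ by a single monochromatic component $D$; then $C$ and $D$ cover $Y$ and Proposition~\ref{prop:5} finishes.

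To build $D$, I would use Fact~\ref{fact:auxiliary}. Take $y,y'\in Y\setminus C$. If they have a common neighbour in $C\cap X$, they are joined by a blue path of length two. Otherwise their neighbourhoods in $C\cap X$ are disjoint, which forces $2(\delta^*(n)-k)\le r=n-k$, that is, $k\ge 2\delta^*(n)-n$. In this case $y$ and $y'$ have at least $2\delta^*(n)-n$ common neighbours, all lying in $X\setminus C$.

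\emph{The main obstacle.} The hard step is exactly this last situation: the edges from $y$ and $y'$ to a common neighbour $x\in X\setminus C$ may have different colours, so no monochromatic path is immediate. In this regime $k$ is pinned between $2\delta^*(n)-n$ and $\lceil\delta^*(n)/2\rceil$, which is a very tight window. I would first try to exploit that window together with the second and third inequalities of Fact~\ref{fact:simple}. The target is one of two outcomes. One is that the blue neighbourhoods in $C\cap X$ of the vertices of $Y\setminus C$ cannot split into two disjoint blocks, since by the third inequality no three pairwise disjoint sets of size $\lfloor\delta^*(n)/2\rfloor+1$ fit in $X$. The other is that $X\setminus C$ is itself covered by one monochromatic component, in which case $C$ and that component cover $X$ and Proposition~\ref{prop:5} applies directly.

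If neither outcome comes out cleanly, the fallback is to show two things. First, $Y\setminus C$ meets at most two blue components. Second, in the remaining configuration a vertex of $X\setminus C$ is joined to all of $Y\setminus C$ by edges of a single colour. Either way we obtain at most two components covering one side, and Proposition~\ref{prop:5} gives $\tc_2(G,\varphi)\le 3$.
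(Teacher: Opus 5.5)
\emph{Verdict: the proposal has a genuine gap in item~(i); the reduction of item~(ii) to item~(i) is fine.}

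Your derivation of item~(ii) from item~(i) is correct and is the paper's argument. The problem is in item~(i). Every outcome you aim for is meant to be finished by Proposition~\ref{prop:5}: a single component containing $Y\setminus C$, a single component containing $X\setminus C$, or the fallback ``at most two components covering one side''. Such an outcome need not exist.

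Take $n=5$, so $\delta^*(5)=3$, with $X=\{x_1,\dots,x_5\}$ and $Y=\{y_1,\dots,y_5\}$. Colour red the nine edges between $\{x_1,x_2,x_3\}$ and $\{y_1,y_2,y_3\}$, together with $x_4y_5$ and $x_5y_4$. Colour blue the edges $x_1y_5$, $x_2y_5$, $x_3y_4$, $x_4y_1$, $x_4y_4$, $x_5y_2$, $x_5y_3$; there are no other edges. Every vertex has degree at least $3$.

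The red components are $C=\{x_1,x_2,x_3,y_1,y_2,y_3\}$, $\{x_4,y_5\}$ and $\{x_5,y_4\}$. The blue components are $\{x_1,x_2,y_5\}$, $\{x_3,x_4,y_1,y_4\}$ and $\{x_5,y_2,y_3\}$. So $C$ meets each side in $\delta^*(5)$ vertices, and every other component meets each side in at most two vertices. Neither the pair $x_4,x_5$ nor the pair $y_4,y_5$ lies in a common component of either colour. Hence no two monochromatic components cover $X$ or cover $Y$, and Proposition~\ref{prop:5} never applies. Three components do cover $V(G)$ (for instance the three red ones), but a proof of item~(i) must exhibit such a cover directly.

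The idea you are missing is the paper's split by dominant colour. Let $A$ be the set of vertices outside $C$ on the side opposite to the large trace of $C$.
\begin{enumerate}[(a)]
\item If $A$ contains a red-dominant vertex, its red component has at least $\lceil\delta^*(n)/2\rceil$ vertices on the large side. By the first inequality of Fact~\ref{fact:simple}, it covers that side together with $C$.
\item Otherwise every vertex of $A$ is blue-dominant, so by the third inequality $A$ meets at most two blue components $B_u,B_v$. Unless $A$ lies in one red component, you can pick $u\in B_u\cap A$ and $v\in B_v\cap A$ with no common neighbour of either colour. Counting then shows that all but at most one vertex $t$ of the large side outside $C$ are common neighbours of $u$ and $v$, joined to them in different colours.
\end{enumerate}

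The cover is then written down explicitly. If $t$ does not exist, or the edge joining $t$ to (say) $u$ is blue, then $C,B_u,B_v$ cover $V(G)$. If that edge is red, the red components $C,R_u,R_v$ cover the large side. Either they cover $V(G)$, or some vertex outside them has only blue edges. In that case the degree bound forces $B_u\cup B_v$ to cover the large side as well, so $C,B_u,B_v$ cover $V(G)$.

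The example above is exactly this last situation, with $A=\{y_4,y_5\}$, $u=y_4$, $v=y_5$, $t=x_5$, and a red edge $ut$. Here no amount of refining the two-colour common-neighbour analysis can produce a single component through $Y\setminus C$.
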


\begin{proof}
We start by proving item~\ref{prop:7-item1}. By symmetry, let $R_1$ be a red component
covering at least $\delta^*(n)$ vertices of $Y$. For $n\le 2$, the result is
immediate, so assume that $n\ge 3$. Since $\delta^*(n)>n/2$,
Proposition~\ref{prop:5} applies whenever one side is covered by at most two
monochromatic components.

First suppose that there exists a red-dominant vertex $u \in X \setminus
R_1$ and let $R_2$ be the red component containing $u$. Then, $|Y\cap R_2| \ge
\lceil\delta^*(n)/2\rceil$. Therefore, from Fact~\ref{fact:simple} we have
$|Y\cap(R_1\cup R_2)|\ge\delta^*(n)+\lceil\delta^*(n)/2\rceil\ge n$.
Thus two red components cover~$Y$, and Proposition~\ref{prop:5} implies
$\tc_2(G,\varphi) \le 3$.

Now suppose there is no red-dominant vertex in $A:=X\setminus R_1$. Then
every vertex $x\in A$ satisfies $d_B(x)>d_R(x)$ and since $d(x)\ge \delta^*(n)$,
we have $d_B(x)\ge \left\lfloor \delta^*(n)/2\right\rfloor+1$. Thus, every
blue component that intersects $A$ contains at least
$\left\lfloor\delta^*(n)/2\right\rfloor+1$ vertices of $Y$. Consequently, at
most two blue components intersect $A$: if three did, their pairwise disjoint
intersections with $Y$ would together contain more than $n$ vertices, which is a contradiction by Fact~\ref{fact:simple}.

If at most one blue component meets $A$, then $X$ is covered by $R_1$ and at
most one blue component and Proposition~\ref{prop:5} implies
$\tc_2(G,\varphi)\le 3$. Thus, we may assume that there are exactly two blue
components $B_u$ and $B_v$ covering $A$.

If every pair of vertices $x\in B_u\cap A$ and $z\in B_v\cap A$ has a common red
neighbour, then all vertices of $A$ lie in a single red component. Hence this
red component together with $R_1$ covers $X$, and Proposition~\ref{prop:5}
implies $\tc_2(G,\varphi)\le 3$. Thus, we may choose vertices $u\in B_u\cap A$ and $v\in
B_v\cap A$ with no common red neighbour. Since $B_u$ and $B_v$ are distinct blue
components, $u$ and $v$ have no common blue neighbour either. Therefore every
common neighbour of $u$ and $v$ is joined to them by edges of distinct colours.

Moreover, $u$ and $v$ have no common neighbour in $Y\cap R_1$: indeed, every
edge from $u$ or $v$ to a vertex of $Y\cap R_1$ is blue, because
$u,v\notin R_1$, and $u$ and $v$ have no common blue neighbour. Hence all
common neighbours of $u$ and $v$ lie in $Y\setminus R_1$.

Let $S$ denote the set of common neighbours of $u$ and $v$ in $Y \setminus R_1$
and $T$ the set of vertices of $Y \setminus R_1$ adjacent to at most one of $u$
and $v$. The next step is to show that $|T| \le 1$. For that, first observe
that we have
\begin{align*}
|S| &\ge d(u) + d(v) - n \ge 2\delta^*(n) - n,\\
|S| &= n - |Y \cap R_1| - |T| \le n - \delta^*(n) - |T|.
\end{align*}
This gives the desired bound on $|T|$, which is 
\begin{equation*}
|T| \le 2n - 3\delta^*(n) \le 1.
\end{equation*}
Denote by $S_u$ the blue neighbours of $u$ in $S$ and $S_v$ the blue neighbours
of $v$ in $S$, so that $S=S_u\mathbin{\dot\cup}S_v$.

If $T=\emptyset$, then the components $R_1$, $B_u$ and $B_v$ cover $Y$. They
also cover $X$, because $B_u$ and $B_v$ cover $X\setminus R_1$. Hence
$\tc_2(G,\varphi)\le 3$. Thus, we may assume that $T=\{t\}$.

We first show that $t$ is adjacent to at least one of $u$ and $v$. Suppose not.
Since $u$ and $v$ have no common blue neighbour, they have disjoint blue
neighbourhoods in $R_1\cap Y$. Hence
\[
    2\delta^*(n)\le d(u)+d(v)\le 2|S|+|R_1\cap Y|
        =2(n-|R_1\cap Y|-1)+|R_1\cap Y|
        \le 2n-\delta^*(n)-2,
\]
which contradicts $3\delta^*(n)>2n-2$. Therefore, without loss of generality,
suppose the sole vertex $t\in T$ is adjacent to $u$ and not to $v$.

If the edge $ut$ is blue, then $B_u$ covers $T$ and $S_u$. Since $B_v$ covers
$S_v$, the components $R_1$, $B_u$ and $B_v$ cover $Y$. They also cover $X$,
because $B_u$ and $B_v$ cover $X\setminus R_1$. Hence $\tc_2(G,\varphi)\le 3$.

It remains to consider the case where $ut$ is red. Let $R_u$ and $R_v$ be the
red components containing $u$ and $v$, respectively. Then $R_u$ covers
$T\cup S_v$, while $R_v$ covers $S_u$. Therefore the three red components
$R_1$, $R_u$ and $R_v$ cover $Y$.

If these three red components also cover $X$, then we are done. Thus, suppose
that there exists
\[
    x\in X\setminus (R_1\cup R_u\cup R_v).
\]
Since $R_1$, $R_u$ and $R_v$ cover $Y$, the vertex $x$ sends no red edge to
$Y$. Hence all edges incident to $x$ are blue. Suppose, without loss of
generality, that $x\in B_u$. Since $B_u$ and $B_v$ are distinct blue components,
$x$ has no blue neighbour in $B_v\cap Y$, and therefore no neighbour at all in
$B_v\cap Y$. Consequently,
\[
    \delta^*(n) \le d(x) \le n-|B_v\cap Y|.
\]
On the other hand, $v$ is blue-dominant, and so
\[
    |B_v\cap Y| \ge d_B(v) \ge \left\lceil \frac{\delta^*(n)}{2}\right\rceil.
\]
Thus,
\[
    \delta^*(n) \le d(x) \le n-|B_v\cap Y|
        \le n-\left\lceil \frac{\delta^*(n)}{2}\right\rceil
        \le \delta^*(n),
\]
where the last inequality follows from Fact~\ref{fact:simple}. Hence equality
holds throughout. In particular, $x$ is adjacent to every vertex of $Y\setminus
B_v$, and all these edges are blue. Therefore $Y\setminus B_v\subseteq B_u$, and
so $B_u\cup B_v$ covers $Y$. Since $B_u$ and $B_v$ cover $X\setminus R_1$, the
three components $R_1$, $B_u$ and $B_v$ cover $V(G)$. Hence $\tc_2(G,\varphi)\le
3$, which proves item~\ref{prop:7-item1}.

Item~\ref{prop:7-item2} follows from item~\ref{prop:7-item1}. Indeed, suppose
without loss of generality that at most three red components cover
$X$. If they also cover $Y$, then $\tc_2(G,\varphi)\le 3$. Otherwise, there is a vertex
$y\in Y$ outside all of them. Every edge from $y$ to $X$ is then blue, so the
blue component containing $y$ covers $N(y)$ and hence at least
$d(y)\ge \delta^*(n)$ vertices of $X$. Item~\ref{prop:7-item1} now gives
$\tc_2(G,\varphi)\le 3$, which finishes the proof.
\end{proof}

\subsection{Proof of Lemma~\ref{lem:upper_bound}}

With Propositions~\ref{prop:5} and~\ref{prop:7} in hand, we now prove the upper
bound for the minimum degree that guarantees that three monochromatic components
are always enough. The argument selects small collections of red and blue
components that contain all dominant vertices on one side of the bipartition.
The preceding propositions dispose of the flexible configurations, leaving only
a rigid arrangement that contradicts the minimum-degree condition.

\begin{proof}
Fix an arbitrary $2$-edge-colouring $\varphi$ of $E(G)$.
Since the result is trivial for $n\le 2$, we may assume that $n\ge 3$. In
particular, $\delta^*(n)>n/2$, and so any two vertices in the same side of the
bipartition have a common neighbour. Moreover, by Proposition~\ref{prop:5},
whenever one side of $G$ is covered by at most two monochromatic components, we
have $\tc_2(G,\varphi)\le 3$. By Proposition~\ref{prop:7}~\ref{prop:7-item1}, we may
also assume throughout the proof that no vertex has at least $\delta^*(n)$
neighbours of the same colour. In particular, since $d(v)\ge \delta^*(n)$ for
every vertex $v$, every vertex is incident to at least one red edge and at least
one blue edge.
 
Suppose first that there are three distinct red components, each containing a
red-dominant vertex of $X$. Since each of these components contains a
red-dominant vertex of $X$, each of them contains at least $\lceil
\delta^*(n)/2\rceil$ vertices of $Y$. Hence, by Fact~\ref{fact:simple}, these
three red components cover $Y$, and Proposition~\ref{prop:7}~\ref{prop:7-item2}
gives $\tc_2(G,\varphi)\le 3$. Thus, we may assume that at most two red components cover
all red-dominant vertices of $X$. The same argument with the colours
interchanged shows that we may also assume that at most two blue components
cover all blue-dominant vertices of $X$.

Let $\mathcal R$ be a minimal collection of at most two red components covering
all red-dominant vertices of $X$, and let $\mathcal B$ be a minimal collection
of at most two blue components covering all blue-dominant vertices of $X$. We
split our proof into two parts, depending on the sizes of $\mathcal R$ and
$\mathcal B$.

We write the elements of $\mathcal R$ as $R_1,R_2$ and those of $\mathcal B$ as
$B_1,B_2$, allowing either or both sets to be empty when the
corresponding collection has fewer than two elements.

The following subset of $Y$ plays an important role in our proof:
\begin{equation*}
Y_{\out} = Y \setminus (R_1 \cup R_2 \cup B_1 \cup B_2).
\end{equation*}
We also consider the vertices that belong to the selected components of exactly
one colour:
\begin{align*}
X_{\R} &= X\cap\bigl((R_1\cup R_2)\setminus(B_1\cup B_2)\bigr), &
Y_{\R} &= Y\cap\bigl((R_1\cup R_2)\setminus(B_1\cup B_2)\bigr),\\
X_{\B} &= X\cap\bigl((B_1\cup B_2)\setminus(R_1\cup R_2)\bigr), &
Y_{\B} &= Y\cap\bigl((B_1\cup B_2)\setminus(R_1\cup R_2)\bigr).
\end{align*}

\begin{figure}[htbp]
    \centering
	\tikzset{every picture/.style={line width=0.75pt}}
\begin{tikzpicture}[scale=0.64,x=1pt,y=1pt]

\definecolor{myred}{rgb}{0.82, 0.01, 0.11}
\definecolor{myblue}{rgb}{0.29, 0.56, 0.89}
\definecolor{mypink}{rgb}{0.96, 0.76, 0.85}
\definecolor{mycyan}{rgb}{0.80, 0.95, 0.98}

\newcommand{\XRBlueRedIntersection}[2]{%
  \begin{scope}
    \clip[rounded corners=10pt] #1;
    \clip[rounded corners=10pt] #2;
    \begin{pgfinterruptboundingbox}
      \fill[purple, opacity=0.4] (-20,-20) rectangle (200,200);
      \foreach \i in {-220,-212,...,180} {
        \draw[white, thick] (\i+220,-20) -- (\i,200);
      }
    \end{pgfinterruptboundingbox}
  \end{scope}%
}

\def\rmargin{6}
\draw[myred, thick, rounded corners=10pt, fill=mypink, fill opacity=0.3]
  (0+\rmargin,120+\rmargin) rectangle (180-\rmargin,180-\rmargin);

\draw[myred, thick, rounded corners=10pt, fill=mypink, fill opacity=0.3]
  (0+\rmargin,60+\rmargin) rectangle (180-\rmargin,120-\rmargin);

\def\bmargin{6}

\draw[myblue, thick, rounded corners=10pt, fill=mycyan, fill opacity=0.3]
  (0+\bmargin, 0+\bmargin) rectangle (60-\bmargin, 180-\bmargin);

\draw[myblue, thick, rounded corners=10pt, fill=mycyan, fill opacity=0.3]
  (60+\bmargin, 0+\bmargin) rectangle (120-\bmargin, 180-\bmargin);

\XRBlueRedIntersection
  {(0+\rmargin,120+\rmargin) rectangle (180-\rmargin,180-\rmargin)}
  {(0+\bmargin,0+\bmargin) rectangle (60-\bmargin,180-\bmargin)}
\XRBlueRedIntersection
  {(0+\rmargin,120+\rmargin) rectangle (180-\rmargin,180-\rmargin)}
  {(60+\bmargin,0+\bmargin) rectangle (120-\bmargin,180-\bmargin)}
\XRBlueRedIntersection
  {(0+\rmargin,60+\rmargin) rectangle (180-\rmargin,120-\rmargin)}
  {(0+\bmargin,0+\bmargin) rectangle (60-\bmargin,180-\bmargin)}
\XRBlueRedIntersection
  {(0+\rmargin,60+\rmargin) rectangle (180-\rmargin,120-\rmargin)}
  {(60+\bmargin,0+\bmargin) rectangle (120-\bmargin,180-\bmargin)}

\node[myred] at (-30, 150) {$R_1 \cap X$};
\node[myred] at (-30, 90)  {$R_2 \cap X$};
\node[myblue] at (30, 200) {$B_1 \cap X$};
\node[myblue] at (90, 200)  {$B_2 \cap X$};

\node[myblue] at (-30, 30) {$X_{\B}$};
\node[myred] at (150, 200) {$X_{\R}$};

\begin{scope}[shift={(210, 0)}]

	\def\rmargin{6}
	\draw[myred, thick, rounded corners=10pt, fill=mypink, fill opacity=0.3]
	  (0+\rmargin,120+\rmargin) rectangle (180-\rmargin,180-\rmargin);

	\draw[myred, thick, rounded corners=10pt, fill=mypink, fill opacity=0.3]
	  (0+\rmargin,60+\rmargin) rectangle (180-\rmargin,120-\rmargin);

	\def\bmargin{6}

	\draw[myblue, thick, rounded corners=10pt, fill=mycyan, fill opacity=0.3]
	  (60+\bmargin, 0+\bmargin) rectangle (120-\bmargin, 180-\bmargin);

	\draw[myblue, thick, rounded corners=10pt, fill=mycyan, fill opacity=0.3]
	  (120+\bmargin, 0+\bmargin) rectangle (180-\bmargin, 180-\bmargin);

	\XRBlueRedIntersection
	  {(0+\rmargin,120+\rmargin) rectangle (180-\rmargin,180-\rmargin)}
	  {(60+\bmargin,0+\bmargin) rectangle (120-\bmargin,180-\bmargin)}
	\XRBlueRedIntersection
	  {(0+\rmargin,120+\rmargin) rectangle (180-\rmargin,180-\rmargin)}
	  {(120+\bmargin,0+\bmargin) rectangle (180-\bmargin,180-\bmargin)}
	\XRBlueRedIntersection
	  {(0+\rmargin,60+\rmargin) rectangle (180-\rmargin,120-\rmargin)}
	  {(60+\bmargin,0+\bmargin) rectangle (120-\bmargin,180-\bmargin)}
	\XRBlueRedIntersection
	  {(0+\rmargin,60+\rmargin) rectangle (180-\rmargin,120-\rmargin)}
	  {(120+\bmargin,0+\bmargin) rectangle (180-\bmargin,180-\bmargin)}

	\node[myred] at (210, 150) {$R_1 \cap Y$};
	\node[myred] at (210, 90)  {$R_2 \cap Y$};
	\node[myblue] at (90, 200) {$B_1 \cap Y$};
	\node[myblue] at (150, 200) {$B_2 \cap Y$};

	\node[myred] at (30, 200) {$Y_{\R}$};
	\node[myblue] at (210, 30) {$Y_{\B}$};

	\draw[gray, thick, rounded corners = 15pt, fill = gray, fill opacity=0.15]
	  (0+\bmargin, 0+\bmargin) rectangle (60-\bmargin, 60-\bmargin);
	\node[gray] at (30, 30) {$Y_{\rm out}$};
\end{scope}

\end{tikzpicture}
    \caption{Schematic representation for the sets
		$X_{\R}, X_{\B}, Y_{\R}, Y_{\B}, Y_{\out}$.}
    \label{fig:structure_XR_XB}
\end{figure}

We first dispose of the case in which one of these minimal covers has size at
most one.

\medskip
\noindent\textbf{Part 1:} $|\mathcal R|\le1$ or $|\mathcal B|\le1$.
\par\smallskip
If $\mathcal R=\emptyset$, then every vertex of $X$ is blue-dominant, and hence
$\mathcal B$ covers $X$. Since $|\mathcal B|\le 2$, Proposition~\ref{prop:5}
gives $\tc_2(G,\varphi)\le 3$. The case $\mathcal B=\emptyset$ is analogous. If both
$\mathcal R$ and $\mathcal B$ have size one, then one red component together
with one blue component covers $X$, and again Proposition~\ref{prop:5} gives
$\tc_2(G,\varphi)\le 3$. Thus, by symmetry between the colours, it remains to consider
the case in which $\mathcal R=\{R_1\}$ and $\mathcal B=\{B_1,B_2\}$. In this case,
$R_2=\emptyset$. Since every vertex of $X$ is red-dominant or blue-dominant, we
have $X\subseteq R_1\cup B_1\cup B_2$. If $X$ is covered by at most two of $R_1$, $B_1$, and $B_2$, then
Proposition~\ref{prop:5} gives $\tc_2(G,\varphi)\le 3$. Hence we may assume that all
three components are necessary to cover $X$. In particular,
\[
    X_{\R} \cap R_1\neq\emptyset,\footnote{As $R_2=\emptyset$, we have $X_{\R}\cap R_1=X_{\R}$. For the sake of consistent exposition, we prefer to write the former.}\qquad
    X_{\B} \cap B_1\neq\emptyset,\qquad
    X_{\B} \cap B_2\neq\emptyset.
\]

We first show that we may assume $Y_{\R}=\emptyset$. Indeed, suppose that
$y\in Y_{\R}$. We claim that $N(y)\subseteq R_1\cap X$. If $yx$ is red, then
$x\in R_1$. If $yx$ is blue and $x\notin R_1$, then $x$ is not red-dominant,
and hence $x$ is blue-dominant. Therefore $x\in B_1\cup B_2$, which would imply
$y\in B_1\cup B_2$, contradicting $y\in Y_{\R}$. Thus
$N(y)\subseteq R_1\cap X$, and so
\[
    |R_1\cap X|\ge d(y)\ge \delta^*(n).
\]
By Proposition~\ref{prop:7}~\ref{prop:7-item1}, this implies $\tc_2(G,\varphi)\le 3$.
Therefore, in the remaining case, we may assume that
\[
    Y_{\R}=\emptyset.
\]

If $Y_{\out}=\emptyset$, then $B_1$ and $B_2$ cover $Y$: indeed, every vertex of
$R_1\cap Y$ lies in $B_1\cup B_2$, since $Y_{\R}=\emptyset$, and every vertex
outside $R_1$ also lies in $B_1\cup B_2$, since $Y_{\out}=\emptyset$. Thus
Proposition~\ref{prop:5} gives $\tc_2(G,\varphi)\le 3$. Hence we may assume that
\[
    Y_{\out}\neq\emptyset.
\]
We next claim that we may choose vertices $u\in X_{\B}\cap B_1$ and
$v\in X_{\B}\cap B_2$ with no common red neighbour. Otherwise, every pair
$x\in X_{\B}\cap B_1$ and $z\in X_{\B}\cap B_2$ has a common red neighbour.
Since both sets are non-empty, all vertices of $X_{\B}$ lie in a single red
component. As $X\subseteq R_1\cup X_{\B}$, this component together with $R_1$
covers $X$, and Proposition~\ref{prop:5} gives
$\tc_2(G,\varphi)\le 3$.

Let $u\in X_{\B}\cap B_1$ and $v\in X_{\B}\cap B_2$ have no common red
neighbour. Since $B_1$ and $B_2$ are distinct blue components, $u$ and $v$ have
no common blue neighbour either. The red neighbours of $u$ and $v$ all lie in
$Y\setminus R_1$, and, since $u$ and $v$ have no common red neighbour, we have
\[
    d_R(u)+d_R(v)\le n-|R_1\cap Y|.
\]
Moreover, the blue neighbours of $u$ lie in $B_1\cap Y$, while the blue
neighbours of $v$ lie in $B_2\cap Y$. Since $B_1$ and $B_2$ are distinct blue
components and no vertex of $Y_{\out}$ lies in $B_1\cup B_2$, we have
\[
    d_B(u)+d_B(v)\le n-|Y_{\out}|.
\]
Hence, $2\delta^*(n)\le d(u)+d(v)\le 2n-|R_1\cap Y|-|Y_{\out}|$,
which gives
\begin{equation}\label{eq:one-red-two-blue-upper}
    |R_1\cap Y|+|Y_{\out}|\le 2(n-\delta^*(n)).
\end{equation}
On the other hand, let $x\in X_{\R} \cap R_1$. We claim that
\[
    N(x)\subseteq (R_1\cap Y)\cup Y_{\out}.
\]
Indeed, if $xy$ is red, then $y\in R_1\cap Y$. If $xy$ is blue, then
$y\notin B_1\cup B_2$, since otherwise $x$ would belong to $B_1\cup B_2$,
contrary to $x\in X_{\R}$. Since in Part 1 we have $R_2=\emptyset$, it follows
that either $y\in R_1\cap Y$ or $y\in Y_{\out}$.

Consequently,
\[
    \delta^*(n)\le d(x)\le |R_1\cap Y|+|Y_{\out}|.
\]
Together with~\eqref{eq:one-red-two-blue-upper}, this gives
\begin{equation}\label{eq:one-red-two-blue-sandwich}
    \delta^*(n)\le |R_1\cap Y|+|Y_{\out}|\le 2(n-\delta^*(n)).
\end{equation}

We have to split the remainder of the proof of Part 1 depending on the value of
$n \pmod 3$. This is needed because 
\[
2(n-\delta^*(n))=
\begin{cases}
\delta^*(n), & \text{if } n\equiv 0 \pmod 3,\\
\delta^*(n)-1, & \text{if } n\equiv 1 \pmod 3,\\
\delta^*(n)+1, & \text{if } n\equiv 2 \pmod 3.
\end{cases}
\]
If $n\equiv 1\pmod 3$, then~\eqref{eq:one-red-two-blue-sandwich} gives a
contradiction. Hence either $|R_1\cap Y|+|Y_{\out}|=\delta^*(n)$ or
$|R_1\cap Y|+|Y_{\out}|=\delta^*(n)+1$; in the latter case, we also have $n\equiv 2\pmod
3$.

First suppose that $|R_1\cap Y|+|Y_{\out}|=\delta^*(n)$. Then, for every
$x\in X_{\R} \cap R_1$, we have
\[
    \delta^*(n)\le d(x)\le |R_1\cap Y|+|Y_{\out}|=\delta^*(n).
\]
Thus $d(x)=|R_1\cap Y|+|Y_{\out}|$, and since
$N(x)\subseteq (R_1\cap Y)\cup Y_{\out}$, it follows that
\[
    N(x)=(R_1\cap Y)\cup Y_{\out}.
\]
In particular, every vertex of $Y_{\out}$ is adjacent to $x$, and all these
edges are blue, since a red edge between $x$ and a vertex in $Y_{\out}$ would
put this vertex in $R_1$. Therefore $Y_{\out}$ is contained in a single blue
component, say $B_3$. Since $Y_{\R}=\emptyset$, the components $B_1$, $B_2$ and
$B_3$ cover $Y$. By Proposition~\ref{prop:7}~\ref{prop:7-item2}, we obtain
$\tc_2(G,\varphi)\le 3$.

It remains to consider the case in which $n\equiv 2\pmod 3$ and
$|R_1\cap Y|+|Y_{\out}|=\delta^*(n)+1$. In this case,
\[
    2n-|R_1\cap Y|-|Y_{\out}|=2\delta^*(n).
\]
Therefore
\[
    2\delta^*(n)
    \le d(u)+d(v)
    \le (n-|R_1\cap Y|)+(n-|Y_{\out}|)
    =2\delta^*(n),
\]
and equality holds throughout. In particular,
\[
    d_R(u)+d_R(v)=n-|R_1\cap Y|.
\]
Since $u$ and $v$ have no common red neighbour and their red neighbours all lie
in $Y\setminus R_1$, we get
\[
    N_R(u)\cup N_R(v)=Y\setminus R_1.
\]
Hence the following three red components cover $Y$: $R_1$, the red component containing $u$, and the
red component containing $v$. By
Proposition~\ref{prop:7}~\ref{prop:7-item2}, we again get $\tc_2(G,\varphi)\le 3$.

This disposes of the case in which one of the two minimal covers has size at
most one. Therefore, in the remaining case, both minimal covers have size
exactly two.

\medskip
\noindent\textbf{Part 2:} $|\mathcal R|=2$ and $|\mathcal B|=2$.
\par\smallskip

Recall that $R_1$ and $R_2$ are the red components that
together cover the red-dominant vertices of $X$ and $B_1$ and $B_2$ are the
blue components that together cover the blue-dominant vertices of~$X$.

In Claims~\ref{claim:9},~\ref{claim:10} and~\ref{claim:11} we refine the sets $X_{\R}, X_{\B}, Y_{\R}, Y_{\B}$ to prove that we may reduce the covering problem in Figure~\ref{fig:structure_XR_XB} to the problem in
Figure~\ref{fig:structure_for_conclusion}, which has a significant number of
simplifying structural properties.

\begin{claim}\label{claim:9}
If at least one of $X_{\R}$, $X_{\B}$, $Y_{\R}$, or $Y_{\B}$ is empty, then
$\tc_2(G,\varphi)\le3$.
\end{claim}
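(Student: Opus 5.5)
The first two cases follow directly from Proposition~\ref{prop:5}; the cases $Y_{\R}=\emptyset$ and $Y_{\B}=\emptyset$ need more work. Recall that in Part~2 every vertex of $X$ is red-dominant or blue-dominant, so $X\subseteq R_1\cup R_2\cup B_1\cup B_2$. If $X_{\R}=\emptyset$, then every vertex of $X\cap(R_1\cup R_2)$ already lies in $B_1\cup B_2$. Hence $B_1$ and $B_2$ cover $X$, and Proposition~\ref{prop:5} gives $\tc_2(G,\varphi)\le 3$. The case $X_{\B}=\emptyset$ is identical with $R_1,R_2$ in place of $B_1,B_2$.

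For the $Y$-side, I would treat $Y_{\R}=\emptyset$; the case $Y_{\B}=\emptyset$ follows by exchanging the colours. When $Y_{\R}=\emptyset$, every vertex of $Y\cap(R_1\cup R_2)$ lies in $B_1\cup B_2$, so $Y\subseteq B_1\cup B_2\cup Y_{\out}$. If $Y_{\out}=\emptyset$, Proposition~\ref{prop:5} finishes. Otherwise I first locate the neighbours of a vertex $y\in Y_{\out}$. Take a neighbour $x$ of $y$.
\begin{itemize}
\item[(a)] If $x\in R_i$, the edge $xy$ must be blue, since otherwise $y\in R_i$.
\item[(b)] If $x\in B_j$, the edge $xy$ must be red, since otherwise $y\in B_j$.
\item[(c)] Consequently $x\notin (R_1\cup R_2)\cap(B_1\cup B_2)$, because the edge $xy$ would then put $y$ into a selected component.
\end{itemize}
Hence $N(y)\subseteq X_{\R}\cup X_{\B}$, all edges from $y$ to $X_{\R}$ are blue, and all edges from $y$ to $X_{\B}$ are red. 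Since $\delta^*(n)>n/2$, any two vertices of $Y_{\out}$ have a common neighbour. That neighbour gives a blue path of length two if it lies in $X_{\R}$, and a red path of length two if it lies in $X_{\B}$. By Fact~\ref{fact:auxiliary}, $Y_{\out}$ lies in a single monochromatic component $C$.

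If $C$ is blue, then $B_1$, $B_2$ and $C$ are three blue components covering $Y$, and Proposition~\ref{prop:7}~\ref{prop:7-item2} gives $\tc_2(G,\varphi)\le 3$. The main obstacle is the case where no blue component contains $Y_{\out}$, so that $C$ is red.

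For this case I would first try to force Proposition~\ref{prop:7}~\ref{prop:7-item1}. Suppose some $y\in Y_{\out}$ has no neighbour in $X_{\R}$. Then all its at least $\delta^*(n)$ neighbours lie in its red component, which therefore has at least $\delta^*(n)$ vertices of $X$, and we are done.

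Otherwise every $y\in Y_{\out}$ has a blue neighbour in $X_{\R}$. I would then analyse $x\in X_{\R}$. Its red neighbours lie in $Y\cap(R_1\cup R_2)\subseteq B_1\cup B_2$. Its blue neighbours avoid $B_1\cup B_2$, so they lie in $Y_{\out}$. Thus the blue components meeting $X_{\R}$ are determined by their traces on $Y_{\out}$. Any two such blue components are disjoint, and each contains a vertex of $Y_{\out}$ with blue degree at least $1$. I would try to show, by the same double-counting of degrees of two vertices with disjoint neighbourhoods as in Part~1, that a single blue component $C'$ contains all of $Y_{\out}$. Then $B_1$, $B_2$ and $C'$ cover $Y$, and we can again apply Proposition~\ref{prop:7}~\ref{prop:7-item2}.

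If that fails, I would fall back on showing that the red component $C$ itself contains at least $\delta^*(n)$ vertices of $X$, for instance when the red neighbourhoods in $X_{\B}$ of vertices of $Y_{\out}$ are large. That would again give Proposition~\ref{prop:7}~\ref{prop:7-item1}.
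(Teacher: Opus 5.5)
Your treatment of $X_{\R}=\emptyset$, $X_{\B}=\emptyset$ and $Y_{\out}=\emptyset$ is correct and agrees with the paper. So are the localisation (a)--(c) of the neighbours of $y\in Y_{\out}$ and the conclusion that $Y_{\out}$ lies in one monochromatic component $C$. The gap is the case you flag yourself as the main obstacle, $C$ red. There the proposal gives only intentions (``I would try to show \dots'', ``if that fails, I would fall back on \dots''), not an argument.

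The tools you name also do not suffice on their own. Take $u,u'\in X_{\R}\cap R_1$ in different blue components. Double counting their degrees gives only $2\delta^*(n)\le 2|R_1\cap Y|+|Y_{\out}|$. This contradicts nothing unless you also have an upper bound on $|R_1\cap Y|+|Y_{\out}|$. The fallback, forcing $C$ to contain $\delta^*(n)$ vertices of $X$, has no reason to hold.

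The missing ingredient is the minimality of $\mathcal R$, which gives a red-dominant vertex in each of $R_1$ and $R_2$. Combine it with a split according to how $X_{\R}$ meets $R_1$ and $R_2$.

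\emph{$X_{\R}$ meets both.} Take $u\in X_{\R}\cap R_1$, $v\in X_{\R}\cap R_2$ and a common neighbour $y$. One of $uy,vy$ is blue. Since $Y_{\R}=\emptyset$, every vertex of $Y\setminus Y_{\out}$ lies in $B_1\cup B_2$, so $y\in Y_{\out}$ and both edges are blue. Hence $X_{\R}$ lies in one blue component, $X$ is covered by three blue components, and Proposition~\ref{prop:7}~\ref{prop:7-item2} applies.

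\emph{$X_{\R}\subseteq R_1$, say.} For $u\in X_{\R}$ we have $N(u)\subseteq (R_1\cap Y)\cup Y_{\out}$. A red-dominant vertex of $R_2$ gives $|R_2\cap Y|\ge\lceil\delta^*(n)/2\rceil$. By Fact~\ref{fact:simple}, $|R_1\cap Y|+|R_2\cap Y|+|Y_{\out}|\ge n$, so $Y_{\B}=\emptyset$. Now $Y\setminus Y_{\out}$ is covered both by $R_1\cup R_2$ and by $B_1\cup B_2$. Your component $C$ therefore completes a cover of $Y$ by three components of a single colour, whichever colour it has.

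In fact the chain $\delta^*(n)\le d(u)\le |R_1\cap Y|+|Y_{\out}|\le n-\lceil\delta^*(n)/2\rceil\le\delta^*(n)$ is tight. It forces every vertex of $Y_{\out}$ to be a blue neighbour of $u$. So the single blue component you were aiming for does exist, but only via this footprint bound.
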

\begin{claimproof}
If $X_{\R}=\emptyset$, then $B_1$ and $B_2$ cover $X$, and
Proposition~\ref{prop:5} gives $\tc_2(G,\varphi)\le3$. The case $X_{\B}=\emptyset$ is
analogous. Hence assume that $X_{\R}$ and $X_{\B}$ are non-empty.

By symmetry, it remains to prove the assertion when $Y_{\R}=\emptyset$. If
$Y_{\out}=\emptyset$, then $B_1$ and $B_2$ cover $Y$, and
Proposition~\ref{prop:5} again applies. We may therefore assume that
$Y_{\out}\neq\emptyset$.

First suppose that $X_{\R}\cap R_1\neq\emptyset$ and $X_{\R}\cap
R_2\neq\emptyset$. Consider arbitrary vertices $u\in X_{\R}\cap R_1$ and $v\in
X_{\R}\cap R_2$. At least one of the edges from any common neighbour $y$ to
$u$ and $v$ is blue, since $u$ and $v$ lie in distinct red components. Moreover,
$Y_{\R}=\emptyset$ implies $(R_1\cup R_2)\cap Y\subseteq B_1\cup B_2$. Hence a
common neighbour in any selected component would put $u$ or $v$ in
$B_1\cup B_2$, a contradiction. Thus $y\in Y_{\out}$, and neither $uy$ nor
$vy$ can be red. Both edges are therefore blue. Since $u$ and $v$ were
arbitrary, all vertices of $X_{\R}$ lie in a single blue component $B_3$.
Together with $B_1$ and $B_2$, this component covers $X$, so
Proposition~\ref{prop:7}~\ref{prop:7-item2} implies $\tc_2(G,\varphi)\le 3$.

Thus, we may assume that $X_{\R}\cap R_1\neq\emptyset$ and $X_{\R}\cap
R_2=\emptyset$. Let $u\in X_{\R}\cap R_1$ and note that the set of red
neighbours of $u$ lies in $R_1\cap Y$, and no blue neighbour of $u$ can lie in
$B_1\cup B_2$. Hence $N(u)\subseteq Y_{\out}\cup (R_1\cap Y)$, and thus
$|Y_{\out}|+|R_1\cap Y|\ge d(u)\ge \delta^*(n)$. Also, since $R_2\cap X$
contains a red-dominant vertex, $|R_2\cap Y|\ge \lceil\delta^*(n)/2\rceil$. It
follows that
\[
|Y_{\out}|+|R_1\cap Y|+|R_2\cap Y|
\ge \delta^*(n)+\left\lceil \frac{\delta^*(n)}{2}\right\rceil\ge n,
\]
which implies that $Y_{\B}=\emptyset$.

It remains to handle the vertices of $Y_{\out}$. If there is only one vertex $y$
in $Y_{\out}$, then, as every vertex has a red neighbour, $y$ belongs to some red
component, which together with $R_1$ and $R_2$ covers $Y$ (as
$Y_{\B}=\emptyset$) and, again, Proposition~\ref{prop:7}~\ref{prop:7-item2}
implies $\tc_2(G,\varphi)\le 3$.

Now suppose that $|Y_{\out}|\ge 2$. Let $u,v\in Y_{\out}$. Since
$\delta^*(n)>n/2$, the vertices $u$ and $v$ have a common neighbour. Moreover,
as $u$ and $v$ do not belong to any of $R_1$, $R_2$, $B_1$ and $B_2$, every
common neighbour of $u$ and $v$ lies in $X_{\R}\cup X_{\B}$. If such a common
neighbour lies in $X_{\R}$, then it is joined to both $u$ and $v$ by blue
edges; if it lies in $X_{\B}$, then it is joined to both $u$ and $v$ by red
edges. Thus every pair of vertices
of $Y_{\out}$ is joined by a monochromatic path of length $2$. By Fact~\ref{fact:auxiliary}, $Y_{\out}$ is contained in a single monochromatic component of $G$. If this
component is red, then together with $R_1$ and $R_2$ it gives three red
components covering $Y$; if it is blue, then together with $B_1$ and $B_2$ it
gives three blue components covering $Y$. In both cases,
Proposition~\ref{prop:7}~\ref{prop:7-item2} implies $\tc_2(G,\varphi)\le 3$.

\end{claimproof}

From Claim~\ref{claim:9}, we assume from now on that $X_{\R}, X_{\B}, Y_{\R}$
and $Y_{\B}$ are non-empty. The next claim identifies four further
configurations in which the desired cover already exists.

\begin{claim}\label{claim:10}
If one of the following holds, then $\tc_2(G,\varphi) \le 3$:
\begin{enumerate}[(i)]
\item $X_{\R}\cap R_1\neq\emptyset$ and $Y_{\R}\cap R_2=\emptyset$, or
      $Y_{\R}\cap R_1\neq\emptyset$ and $X_{\R}\cap R_2=\emptyset$;
\item $X_{\R}\cap R_2\neq\emptyset$ and $Y_{\R}\cap R_1=\emptyset$, or
      $Y_{\R}\cap R_2\neq\emptyset$ and $X_{\R}\cap R_1=\emptyset$;
\item $X_{\B}\cap B_1\neq\emptyset$ and $Y_{\B}\cap B_2=\emptyset$, or
      $Y_{\B}\cap B_1\neq\emptyset$ and $X_{\B}\cap B_2=\emptyset$;
\item $X_{\B}\cap B_2\neq\emptyset$ and $Y_{\B}\cap B_1=\emptyset$, or
      $Y_{\B}\cap B_2\neq\emptyset$ and $X_{\B}\cap B_1=\emptyset$.
\end{enumerate}
\end{claim}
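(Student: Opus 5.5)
Every item of Claim~\ref{claim:10} should follow from item~(i) by symmetry. Exchanging the labels of $R_1$ and $R_2$ turns (i) into (ii). Exchanging the colours red and blue swaps $\mathcal R$ with $\mathcal B$, $X_{\R}$ with $X_{\B}$ and $Y_{\R}$ with $Y_{\B}$, so it turns (i) and (ii) into (iii) and (iv). This is legitimate because the setup of Part~2 is symmetric in the colours: both collections have size two and are defined through dominant vertices of $X$. Swapping $X$ and $Y$ is \emph{not} a symmetry, since the components were chosen using dominant vertices of $X$. So the two halves of item~(i) need separate arguments. Throughout I assume, by Claim~\ref{claim:9}, that all four sets $X_{\R},X_{\B},Y_{\R},Y_{\B}$ are non-empty.

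\emph{First half of (i).} Take $x\in X_{\R}\cap R_1$ and assume $Y_{\R}\cap R_2=\emptyset$. I will locate $N(x)$.
\begin{itemize}
\item A red neighbour of $x$ lies in $R_1\cap Y$.
\item A blue neighbour $y$ of $x$ cannot lie in $B_1\cup B_2$, since otherwise $x\in B_1\cup B_2$.
\item So a blue neighbour $y$ lies in $(R_1\cap Y)$, in $Y_{\out}$, or in $(R_2\cap Y)\setminus(B_1\cup B_2)=Y_{\R}\cap R_2=\emptyset$.
\end{itemize}
Hence $N(x)\subseteq (R_1\cap Y)\cup Y_{\out}$, and so $|R_1\cap Y|+|Y_{\out}|\ge\delta^*(n)$. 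Since $R_2\in\mathcal R$ and $\mathcal R$ is minimal, $R_2$ contains a red-dominant vertex of $X$, giving $|R_2\cap Y|\ge\lceil\delta^*(n)/2\rceil$. The sets $R_1\cap Y$, $R_2\cap Y$ and $Y_{\out}$ are pairwise disjoint, so Fact~\ref{fact:simple} gives that their union has at least $n$ vertices and therefore equals $Y$. Consequently $Y_{\B}=\emptyset$, and Claim~\ref{claim:9} yields $\tc_2(G,\varphi)\le 3$. This repeats the argument from the proof of Claim~\ref{claim:9}.

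\emph{Second half of (i).} Take $y\in Y_{\R}\cap R_1$ and assume $X_{\R}\cap R_2=\emptyset$. I will show $N(y)\subseteq R_1\cap X$.
\begin{itemize}
\item A red neighbour of $y$ lies in $R_1$.
\item For a blue neighbour $x$ of $y$: $x\notin B_1\cup B_2$, since $y\notin B_1\cup B_2$.
\item Every vertex of $X$ is red- or blue-dominant, so $X\subseteq R_1\cup R_2\cup B_1\cup B_2$. Hence $x\in X_{\R}$.
\item Since $X_{\R}\cap R_2=\emptyset$, we get $x\in R_1$.
\end{itemize}
Thus $|R_1\cap X|\ge d(y)\ge\delta^*(n)$, and Proposition~\ref{prop:7}~\ref{prop:7-item1} gives $\tc_2(G,\varphi)\le3$.

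I expect no real obstacle. The only delicate points are these. First, in each half, the blue neighbours must be routed correctly, using that $X$ (but not $Y$) is covered by the four selected components. Second, the symmetry reduction must be justified by checking that it only relabels $R_1\leftrightarrow R_2$ or swaps colours, and never swaps sides.
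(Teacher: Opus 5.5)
Your proposal is correct. Your first half of (i) and your symmetry reduction are the same as the paper's argument; the paper likewise proves only (i) and obtains (ii)--(iv) by symmetry. The second half of (i) is where your route genuinely differs.

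The paper's argument for the second half runs as follows. It takes a blue neighbour of $u\in Y_{\R}\cap R_1$ to deduce $X_{\R}\cap R_1\neq\emptyset$, and handles the case $Y_{\R}\cap R_2=\emptyset$ by the first half. Assuming then $Y_{\R}\cap R_2\neq\emptyset$, it shows that $Y_{\out}$ and $Y_{\R}\cap R_2$ lie in a common blue component. If every pair in $(Y_{\R}\cap R_1)\times(Y_{\R}\cap R_2)$ has a common blue neighbour, it finishes with Proposition~\ref{prop:7}~\ref{prop:7-item2}. Otherwise it runs a counting argument on a pair $u,v$ with no common blue neighbour, forcing $Y_{\B}=\emptyset$.

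You instead observe directly that $N(y)\subseteq R_1\cap X$. Hence the red component $R_1$ meets $X$ in at least $\delta^*(n)$ vertices, and Proposition~\ref{prop:7}~\ref{prop:7-item1} applies at once. This is the same move the paper itself makes for $Y_{\R}$ in Part~1.

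Your route is much shorter and loses nothing. The paper's counting step actually uses the same inclusion $N(u)\subseteq R_1\cap X$ to bound $|R_1\cap X|\ge d(u)\ge\delta^*(n)$. So the configuration the paper analyses at length is already disposed of by Proposition~\ref{prop:7}~\ref{prop:7-item1}, and its extra case analysis is redundant.
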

\begin{claimproof}
By symmetry, it suffices to prove that (i) implies $\tc_2(G,\varphi) \le 3$. Suppose
that $X_{\R} \cap R_1 \neq \emptyset$ and $Y_{\R} \cap R_2 = \emptyset$ and
consider a vertex $u \in X_{\R} \cap R_1$. We have that $N(u) \subseteq (R_1 \cap Y) \cup Y_{\out}$, as there are no edges between $X_{\R}$ and $Y_{\B}$, so $|Y_{\out}| + |R_1
\cap Y| \ge \delta^*(n)$. Furthermore, we know there is a red-dominant vertex in
$R_2 \cap X$. Therefore, $|R_2 \cap Y| \ge \lceil \delta^*(n)/2 \rceil$, which,
by Fact~\ref{fact:simple}, implies
\begin{equation*}
|R_1 \cap Y|+|R_2 \cap Y|+|Y_{\out}|
\ge \delta^*(n)+\left\lceil\frac{\delta^*(n)}{2}\right\rceil\ge n.
\end{equation*}
Since $|Y|=n$, this means that $Y_{\B}=\emptyset$, so Claim~\ref{claim:9}
gives $\tc_2(G,\varphi)\le3$.

It remains to prove that $Y_{\R} \cap R_1 \neq \emptyset$ and $X_{\R} \cap R_2 =
\emptyset$ implies $\tc_2(G,\varphi) \le 3$. By our standing assumption, every vertex in
$G$ has neighbours in both colours. Therefore, given a vertex $u \in Y_{\R} \cap
R_1$, we know that $u$ has at least one blue neighbour. Since $u \in Y_{\R}$ and
$X_{\R}\cap R_2 = \emptyset$, the blue neighbours of $u$ belong to $X_{\R} \cap
R_1$. Therefore, $X_{\R} \cap R_1 \neq \emptyset$ and we already proved that in
this case, $Y_{\R} \cap R_2 = \emptyset$ implies $\tc_2(G,\varphi) \le 3$. Thus, we
assume that $Y_{\R} \cap R_2 \neq \emptyset$.

Now note that if $Y_{\out}\neq\emptyset$, then every
vertex of $Y_{\R}\cap R_2$ lies in the same blue component as every vertex of
$Y_{\out}$. Indeed, let $y\in Y_{\R}\cap R_2$ and $z\in Y_{\out}$. Since
$\delta^*(n)>n/2$, the vertices $y$ and $z$ have a common neighbour $x\in X$.

The common neighbour $x$ cannot lie in $(R_1\cup R_2)\cap(B_1\cup B_2)$.
Indeed, if $x\in R_i\cap B_j$ for some $i,j\in\{1,2\}$, then the edge $xz$ is
either red or blue; in the first case $z\in R_i$, and in the second case $z\in
B_j$, contradicting $z\in Y_{\out}$. Since $R_1\cup R_2\cup B_1\cup B_2$ covers
$X$, it follows that $x\in X_{\R}\cup X_{\B}$.

We claim that $x\notin X_{\B}$. If $xy$ were blue, then $y$ would lie in
$B_1\cup B_2$, contradicting $y\in Y_{\R}$. If $xy$ were red, then, since $y\in
R_2$, the vertex $x$ would lie in $R_2$, contradicting $x\in X_{\B}$. Hence
$x\in X_{\R}$. Since we are assuming $X_{\R}\cap R_2=\emptyset$, we have $x\in
X_{\R}\cap R_1$. Now $xy$ cannot be red, as this would merge $R_1$ and $R_2$,
and $xz$ cannot be red, as this would put $z$ in $R_1$. Hence both $xy$ and $xz$
are blue.

Now suppose that every pair of vertices $u\in Y_{\R}\cap R_1$ and $v\in
Y_{\R}\cap R_2$ has a common blue neighbour. Then all vertices of $Y_{\R}\cap
R_1$ and $Y_{\R}\cap R_2$ lie in a single blue component. Moreover, if
$Y_{\out}\neq\emptyset$, the previous paragraph shows that $Y_{\out}$ also lies
in this same blue component. Hence this blue component together with $B_1$ and
$B_2$ covers $Y$, and Proposition~\ref{prop:7}~\ref{prop:7-item2} implies that
$\tc_2(G,\varphi)\le 3$.

Thus, from now on we assume there exist vertices $u\in Y_{\R}\cap R_1$ and
$v\in Y_{\R}\cap R_2$ with no common blue neighbour. Then, from
$d_B(u)\geq 1$ and $d_B(v)\geq \delta^*(n)-d_R(v)$, we have

\begin{equation}
\label{claim:10-eq1}
|N_B(u) \cup N_B(v)| \ge 1 + (\delta^*(n) - d_R(v)).
\end{equation}
To obtain an upper bound for $d_R(v)$, note that $X_{\B} \neq \emptyset$ (from
Claim~\ref{claim:9}) and that all red neighbours of $v$ lie in $R_2\cap X$. Hence
\begin{align*}
d_R(v) \le |R_2 \cap X| &= n - |X_{\B}| - |R_1 \cap X|\\
& \le n - 1 - d(u)\\
& \le n - 1 - \delta^*(n).
\end{align*}
Here the equality uses the partition
$X=(R_1\cap X)\mathbin{\dot\cup}(R_2\cap X)\mathbin{\dot\cup}X_{\B}$, and the
second inequality follows from $|X_{\B}|\ge1$ and $N(u)\subseteq R_1\cap X$.

Substituting $d_R(v)\le n-1-\delta^*(n)$ into~\eqref{claim:10-eq1}, we obtain
\begin{equation*}
|N_B(u) \cup N_B(v)| \ge 2 + 2\delta^*(n) - n.
\end{equation*}
Since $\delta^*(n) \ge \frac{2n-1}{3}$, we have $|N_B(u) \cup N_B(v)| \ge n -
(\delta^*(n) - 1)$, which implies that the set $X \setminus (N_B(u) \cup
N_B(v))$ has at most $\delta^*(n) - 1$ vertices. Hence, since $\delta(G)\geq
\delta^*(n)$, any vertex of $Y$ has a neighbour in $N_B(u) \cup N_B(v)$. But by
definition, vertices of $Y_{\B}$ cannot have neighbours in $X_{\R}$, which
contains $N_B(u)\cup N_B(v)$. Hence $Y_{\B}=\emptyset$, and Claim~\ref{claim:9}
gives $\tc_2(G,\varphi)\le3$, completing the proof.
\end{claimproof}

The final claim further restricts how the four selected components meet the
red-only and blue-only sets.

\begin{claim}\label{claim:11}
If one of the following holds, then $\tc_2(G,\varphi) \le 3$:
\begin{enumerate}[(i)]
\item $X_{\R} \cap R_1 \neq \emptyset$ and $X_{\R} \cap R_2 \neq \emptyset$;
\item $X_{\B} \cap B_1 \neq \emptyset$ and $X_{\B} \cap B_2 \neq \emptyset$;
\item $Y_{\R} \cap R_1 \neq \emptyset$ and $Y_{\R} \cap R_2 \neq \emptyset$;
\item $Y_{\B} \cap B_1 \neq \emptyset$ and $Y_{\B} \cap B_2 \neq \emptyset$.
\end{enumerate}
\end{claim}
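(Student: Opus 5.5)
The plan is first to use Claim~\ref{claim:10} to merge items (i) and (iii), and likewise (ii) and (iv). Suppose (i) holds, so $X_{\R}\cap R_1$ and $X_{\R}\cap R_2$ are both non-empty. By Claim~\ref{claim:10}(i) we may assume $Y_{\R}\cap R_2\neq\emptyset$, and by Claim~\ref{claim:10}(ii) we may assume $Y_{\R}\cap R_1\neq\emptyset$. So (iii) holds as well. Conversely, if (iii) holds, the second alternatives of Claim~\ref{claim:10}(i),(ii) force $X_{\R}\cap R_2\neq\emptyset$ and $X_{\R}\cap R_1\neq\emptyset$. Hence it suffices to treat the situation in which all four sets $X_{\R}\cap R_i$ and $Y_{\R}\cap R_i$ ($i=1,2$) are non-empty. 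Items (ii) and (iv) are then identical with the colours exchanged.

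Next I would record the neighbourhood structure, exactly as in Claims~\ref{claim:9} and~\ref{claim:10}. There are no edges between $X_{\R}$ and $Y_{\B}$, nor between $X_{\B}$ and $Y_{\R}$. A vertex $u\in X_{\R}\cap R_1$ has its red neighbours in $R_1\cap Y$ and its blue neighbours in $Y_{\R}\cup Y_{\out}$, so
\[
N(u)\subseteq (R_1\cap Y)\cup (Y_{\R}\cap R_2)\cup Y_{\out},
\]
and symmetrically for $v\in X_{\R}\cap R_2$. A common neighbour of such $u$ and $v$ lies in $Y_{\R}\cup Y_{\out}$. If it is in $Y_{\out}$, both edges are blue. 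Now suppose every pair $u\in X_{\R}\cap R_1$, $v\in X_{\R}\cap R_2$ has a common blue neighbour. Then $X_{\R}$ lies in a single blue component $B_3$, so $B_1,B_2,B_3$ cover $X$ and Proposition~\ref{prop:7}~\ref{prop:7-item2} finishes. So I fix such $u,v$ with no common blue neighbour; in particular they have no common neighbour in $Y_{\out}$. Symmetrically, if every $y\in Y_{\R}\cap R_1$, $z\in Y_{\R}\cap R_2$ have a common blue neighbour, I want $Y_{\R}$ in one blue component. Here I reuse the argument from the proof of Claim~\ref{claim:10} that vertices of $Y_{\out}$ attach by blue paths through $X_{\R}$; it has to be redone now that $X_{\R}$ meets both $R_1$ and $R_2$. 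That would put $Y_{\R}\cup Y_{\out}$ in a blue component which, with $B_1,B_2$, covers $Y$. Otherwise I fix $y,z$ with no common blue neighbour. Their blue neighbourhoods are disjoint subsets of $X_{\R}$, and their red neighbourhoods lie in $R_1\cap X$ and $R_2\cap X$ respectively.

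The final step is counting, with the goal of forcing $Y_{\B}=\emptyset$ or $X_{\B}=\emptyset$ and then invoking Claim~\ref{claim:9}. The available inequalities are:
\[
2\delta^*(n)\le d(u)+d(v)\le |(R_1\cup R_2)\cap Y|+|Y_{\out}|+|Y_{\R}| .
\]
Here the $Y_{\R}$ term is the only overlap, each such vertex being a red neighbour of one and a blue neighbour of the other. Similarly,
\[
2\delta^*(n)\le d(y)+d(z)\le |(R_1\cup R_2)\cap X|+|X_{\R}| .
\]
The footprint bounds $|B_j\cap Y|\ge\lceil\delta^*(n)/2\rceil$ come from blue-dominant vertices of $X$. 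I would combine these with $|X_{\B}|,|Y_{\B}|\ge1$ and $3\delta^*(n)\ge 2n-1$, aiming at a contradiction, or at the conclusion that every vertex of $Y_{\B}$ must have a neighbour in $N_B(y)\cup N_B(z)\subseteq X_{\R}$, which is impossible, as in the end of Claim~\ref{claim:10}. I expect this counting to be the main obstacle. It may need extra case distinctions, or a sharper choice of the vertices $u,v,y,z$ (for instance, ones of minimal degree into $Y_{\R}$), before the inequalities close. If they do not close directly, I would first try to show that the red neighbourhoods of $y$ and $z$ together with $X_{\B}$ leave too little room for $d(u)+d(v)$.
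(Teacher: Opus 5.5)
There is a genuine gap: the proposal stops at the step that decides the claim. Your reduction through Claim~\ref{claim:10} to the case where all of $X_{\R}\cap R_i$ and $Y_{\R}\cap R_i$ are non-empty is correct, and it is how the paper begins. Your dichotomy for $u\in X_{\R}\cap R_1$, $v\in X_{\R}\cap R_2$ is also sound: either $X_{\R}$ lies in one blue component and Proposition~\ref{prop:7}~\ref{prop:7-item2} applies, or $2\delta^*(n)\le d(u)+d(v)\le n-|Y_{\B}|+|Y_{\R}|$. But you only list inequalities and say you would combine them, without exhibiting a combination that closes. The concrete endpoints you suggest for the $y,z$ branch would stall:
\begin{enumerate}[(a)]
\item The $Y_{\out}$-attachment from Claim~\ref{claim:10} does not transfer. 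A common neighbour $x\in X_{\R}\cap R_2$ of $y\in Y_{\R}\cap R_2$ and $w\in Y_{\out}$ may now be joined to $y$ by a red edge, so no blue path is forced.
\item Combining $d(y)+d(z)\le n-|X_{\B}|+|X_{\R}|$ with $|X_{\R}|\le n-\delta^*(n)$ (a vertex of $Y_{\B}$ has no neighbour in $X_{\R}$) and $3\delta^*(n)\ge 2n-1$ yields only $|X_{\B}|\le 2n-3\delta^*(n)$. This equals $1$ when $n\equiv 2\pmod 3$. In that case neither $X_{\B}=\emptyset$ nor ``every vertex of $Y_{\B}$ has a neighbour in $N_B(y)\cup N_B(z)$'' follows: the bounds give only $|N_B(y)\cup N_B(z)|\ge n-\delta^*(n)$, one short of what is needed.
\end{enumerate}

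There are two ways to close it. The first uses only your own list. By minimality of $\mathcal B$, each of $B_1,B_2$ contains a blue-dominant vertex of $X$, so $|(B_1\cup B_2)\cap Y|\ge 2\lceil\delta^*(n)/2\rceil$, and this set is disjoint from $Y_{\R}$. Substituting $|Y_{\R}|\le n-2\lceil\delta^*(n)/2\rceil$ into your $u,v$ inequality gives
\[
|Y_{\B}|\le 2\Bigl(n-\delta^*(n)-\Bigl\lceil\frac{\delta^*(n)}{2}\Bigr\rceil\Bigr)\le 0
\]
by Fact~\ref{fact:simple}, contradicting Claim~\ref{claim:9}. This settles (i) directly, and (iii) reduces to (i) via Claim~\ref{claim:10}, so the whole $y,z$ branch is unnecessary.

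The paper instead uses no dichotomy at all. For arbitrary $y\in Y_{\R}\cap R_1$ and $z\in Y_{\R}\cap R_2$ one has $N(y)\subseteq(R_1\cap X)\cup(X_{\R}\cap R_2)$ and $N(z)\subseteq(R_2\cap X)\cup(X_{\R}\cap R_1)$. The two extra parts are automatically disjoint, which gives $|X_{\B}|\le 1$ as in (b). The idea missing on that route is that the residual case $|X_{\B}|=1$ is finished by a cover, not a contradiction. The single vertex of $X_{\B}$ has a red edge, so one red component together with $R_1,R_2$ covers $X$, and Proposition~\ref{prop:7}~\ref{prop:7-item2} applies.
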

\begin{claimproof}
By Claim~\ref{claim:10}, in the remaining case item (i) can only occur together
with item (iii), and item (ii) can only occur together with item (iv). Thus, it
is enough to prove (i), as the proof of item (ii) follows analogously. Suppose
that $X_{\R} \cap R_1 \neq \emptyset$ and $X_{\R} \cap R_2 \neq \emptyset$. By
Claim~\ref{claim:10}, this implies $Y_{\R} \cap R_1 \neq \emptyset$ and $Y_{\R}
\cap R_2 \neq \emptyset$. Let $u \in Y_{\R} \cap R_1$ and $v \in Y_{\R} \cap
R_2$ and note that, since $2\delta^*(n) \le d(u) + d(v)$, we have
\begin{equation}\label{eq:2}
\begin{aligned}
2\delta^*(n) &\le (|R_1 \cap X| + d_B(u, R_2)) + (|R_2 \cap X| + d_B(v, R_1)) \\
&= n - |X_{\B}| + d_B(u, R_2) + d_B(v, R_1).
\end{aligned}
\end{equation}
Since $Y_{\B}\neq\emptyset$ by Claim~\ref{claim:9},
\[
d_B(u,R_2)+d_B(v,R_1)\le n-\delta^*(n).
\]
Indeed, both restricted blue neighbourhoods lie in $X_{\R}$, and the
definitions of $X_{\R}$ and $Y_{\B}$ forbid edges between these two sets. Together with~\eqref{eq:2},
this gives
\begin{equation*}
|X_{\B}| \le 2n - 3\delta^*(n) \le 2n - 3\left(\frac{2n-1}{3}\right) = 1.
\end{equation*}
Thus $|X_{\B}|=1$. Its unique vertex is incident to a red edge, so a red
component covers $X_{\B}$. This component together with $R_1$ and $R_2$ covers $X$, and
Proposition~\ref{prop:7}~\ref{prop:7-item2} gives $\tc_2(G,\varphi) \le 3$.
\end{claimproof}

After possibly interchanging $R_1$ and $R_2$ and, independently, $B_1$ and
$B_2$, Claims~\ref{claim:9}--\ref{claim:11} allow us to assume the following
configuration, shown in Figure~\ref{fig:structure_for_conclusion}:
\begin{equation}\label{eq:3}
\begin{aligned}
X_{\R}\cap R_1&\neq\emptyset, & Y_{\R}\cap R_2&\neq\emptyset,
& X_{\B}\cap B_1&\neq\emptyset, & Y_{\B}\cap B_2&\neq\emptyset,\\
X_{\R}\cap R_2&=\emptyset, & Y_{\R}\cap R_1&=\emptyset,
& X_{\B}\cap B_2&=\emptyset, & Y_{\B}\cap B_1&=\emptyset.
\end{aligned}
\end{equation}

If $Y_{\out}\neq\emptyset$, then, since $\delta^*(n)>n/2$, every pair
$u\in Y_{\R}\cap R_2$ and $v\in Y_{\out}$ has a common neighbour $x\in X$.
The vertex $x$ cannot lie in $(R_1\cup R_2)\cap(B_1\cup B_2)$, since the edge
$xv$ would put $v$ in one of the four selected components. Hence
$x\in X_{\R}\cup X_{\B}$. The case $x\in X_{\B}$ is impossible: either colour
on $xu$ would put $u$ in a selected blue component or $x$ in $R_2$. Thus,
by~\eqref{eq:3}, $x\in X_{\R}\cap R_1$, and both $xu$ and $xv$ are blue.
Consequently, $Y_{\out}\cup(Y_{\R}\cap R_2)$ lies in a blue component, which
together with $B_1$ and $B_2$ covers $Y$. Proposition~\ref{prop:7}~\ref{prop:7-item2}
then gives $\tc_2(G,\varphi)\le3$. We may therefore assume that
$Y_{\out}=\emptyset$ and introduce the following notation:
\[
\begin{aligned}
C_{11}&:=R_1\cap B_1\cap Y, & C_{21}&:=R_2\cap B_1\cap Y,
& S&:=R_2\cap Y_{\R},\\
C_{12}&:=R_1\cap B_2\cap Y, & C_{22}&:=R_2\cap B_2\cap Y,
& T&:=B_2\cap Y_{\B}.
\end{aligned}
\]
Because $Y_{\out}=\emptyset$ and~\eqref{eq:3} holds, the six sets
$C_{11},C_{12},C_{21},C_{22},S,T$ form a partition of $Y$.
By the minimality of $\mathcal R$ and $\mathcal B$, there is a red-dominant
vertex $u\in R_2\cap X$ and a blue-dominant vertex $v\in B_2\cap X$. Therefore,
\begin{align}
|C_{21}|+|C_{22}|+|S|
    &\ge d_R(u)\ge\left\lceil\delta^*(n)/2\right\rceil,
    \label{eq:delta*/2-1}\\
|C_{12}|+|C_{22}|+|T|
    &\ge d_B(v)\ge\left\lceil\delta^*(n)/2\right\rceil.
    \label{eq:delta*/2-2}
\end{align}

If $|X_{\R}\cap R_1|\le 1$, then the unique vertex of $X_{\R}\cap R_1$ has a
blue neighbour, necessarily in $Y_{\R}\cap R_2$, and therefore a blue component
together with $B_1$ and $B_2$ covers $X$. Hence
Proposition~\ref{prop:7}~\ref{prop:7-item2} gives $\tc_2(G,\varphi)\le 3$. Thus, we may
assume that $|X_{\R}\cap R_1| \geq 2$.

Note that if every pair of vertices in $X_{\R}\cap R_1$ has a common blue
neighbour (necessarily in $Y_{\R}\cap R_2$), then there would be three blue
components covering $X$, so Proposition~\ref{prop:7}~\ref{prop:7-item2} would
give $\tc_2(G,\varphi)\le3$. Thus, we may assume that there are vertices
$u_1$ and $u_2$ in $X_{\R} \cap R_1$ with no common blue neighbour. Analogously,
if $|X_{\B}\cap B_1|\le 1$, then the unique vertex of $X_{\B}\cap B_1$ has a red
neighbour, necessarily in $Y_{\B}\cap B_2$, and therefore a red component
together with $R_1$ and $R_2$ covers $X$. Hence
Proposition~\ref{prop:7}~\ref{prop:7-item2} gives $\tc_2(G,\varphi)\le 3$. We may
therefore assume that $|X_{\B}\cap B_1|\ge 2$ and, analogously, choose vertices
$v_1,v_2\in X_{\B}\cap B_1$ with no common red neighbour. For $i\in\{1,2\}$,
define
\begin{equation*}
S_i := N_B(u_i) \quad \text{and} \quad T_i := N_R(v_i).
\end{equation*}

\begin{figure}[htbp]
    \centering
	\tikzset{every picture/.style={line width=0.75pt}}
\begin{tikzpicture}[scale=0.64,x=1pt,y=1pt]

\definecolor{myred}{rgb}{0.82, 0.01, 0.11}
\definecolor{myblue}{rgb}{0.29, 0.56, 0.89}
\definecolor{mypink}{rgb}{0.96, 0.76, 0.85}
\definecolor{mycyan}{rgb}{0.80, 0.95, 0.98}

\newcommand{\ConclusionBlueRedIntersection}[2]{%
  \begin{scope}
    \clip[rounded corners=10pt] #1;
    \clip[rounded corners=10pt] #2;
    \begin{pgfinterruptboundingbox}
      \fill[purple, opacity=0.4] (-20,-20) rectangle (200,200);
      \foreach \i in {-220,-212,...,180} {
        \draw[white, thick] (\i+220,-20) -- (\i,200);
      }
    \end{pgfinterruptboundingbox}
  \end{scope}%
}

\def\rmargin{6}
\draw[myred, thick, rounded corners=10pt, fill=mypink, fill opacity=0.3]
  (0+\rmargin,120+\rmargin) rectangle (180-\rmargin,180-\rmargin);

\draw[myred, thick, rounded corners=10pt, fill=mypink, fill opacity=0.3]
  (0+\rmargin,60+\rmargin) rectangle (120-\rmargin,120-\rmargin);

\def\bmargin{6}

\draw[myblue, thick, rounded corners=10pt, fill=mycyan, fill opacity=0.3]
  (0+\bmargin, 0+\bmargin) rectangle (60-\bmargin, 180-\bmargin);

\draw[myblue, thick, rounded corners=10pt, fill=mycyan, fill opacity=0.3]
  (60+\bmargin, 60+\bmargin) rectangle (120-\bmargin, 180-\bmargin);

\ConclusionBlueRedIntersection
  {(0+\rmargin,120+\rmargin) rectangle (180-\rmargin,180-\rmargin)}
  {(0+\bmargin,0+\bmargin) rectangle (60-\bmargin,180-\bmargin)}
\ConclusionBlueRedIntersection
  {(0+\rmargin,120+\rmargin) rectangle (180-\rmargin,180-\rmargin)}
  {(60+\bmargin,60+\bmargin) rectangle (120-\bmargin,180-\bmargin)}
\ConclusionBlueRedIntersection
  {(0+\rmargin,60+\rmargin) rectangle (120-\rmargin,120-\rmargin)}
  {(0+\bmargin,0+\bmargin) rectangle (60-\bmargin,180-\bmargin)}
\ConclusionBlueRedIntersection
  {(0+\rmargin,60+\rmargin) rectangle (120-\rmargin,120-\rmargin)}
  {(60+\bmargin,60+\bmargin) rectangle (120-\bmargin,180-\bmargin)}

\node[myred] at (-30, 150) {$R_1 \cap X$};
\node[myred] at (-30, 90)  {$R_2 \cap X$};
\node[myblue] at (30, 200) {$B_1 \cap X$};
\node[myblue] at (90, 200)  {$B_2 \cap X$};

\begin{scope}[shift={(210, 0)}]

	\def\rmargin{6}
	\draw[myred, thick, rounded corners=10pt, fill=mypink, fill opacity=0.3]
	  (60+\rmargin,120+\rmargin) rectangle (180-\rmargin,180-\rmargin);

	\draw[myred, thick, rounded corners=10pt, fill=mypink, fill opacity=0.3]
	  (0+\rmargin,60+\rmargin) rectangle (180-\rmargin,120-\rmargin);

	\def\bmargin{6}

	\draw[myblue, thick, rounded corners=10pt, fill=mycyan, fill opacity=0.3]
	  (60+\bmargin, 60+\bmargin) rectangle (120-\bmargin, 180-\bmargin);

	\draw[myblue, thick, rounded corners=10pt, fill=mycyan, fill opacity=0.3]
	  (120+\bmargin, 0+\bmargin) rectangle (180-\bmargin, 180-\bmargin);

	\ConclusionBlueRedIntersection
	  {(60+\rmargin,120+\rmargin) rectangle (180-\rmargin,180-\rmargin)}
	  {(60+\bmargin,60+\bmargin) rectangle (120-\bmargin,180-\bmargin)}
	\ConclusionBlueRedIntersection
	  {(60+\rmargin,120+\rmargin) rectangle (180-\rmargin,180-\rmargin)}
	  {(120+\bmargin,0+\bmargin) rectangle (180-\bmargin,180-\bmargin)}
	\ConclusionBlueRedIntersection
	  {(0+\rmargin,60+\rmargin) rectangle (180-\rmargin,120-\rmargin)}
	  {(60+\bmargin,60+\bmargin) rectangle (120-\bmargin,180-\bmargin)}
	\ConclusionBlueRedIntersection
	  {(0+\rmargin,60+\rmargin) rectangle (180-\rmargin,120-\rmargin)}
	  {(120+\bmargin,0+\bmargin) rectangle (180-\bmargin,180-\bmargin)}

	\node[myred] at (210, 150) {$R_1 \cap Y$};
	\node[myred] at (210, 90)  {$R_2 \cap Y$};
	\node[myblue] at (90, 200) {$B_1 \cap Y$};
	\node[myblue] at (150, 200) {$B_2 \cap Y$};

	\node[font=\boldmath] at (90, 150) {$C_{11}$};
	\node[font=\boldmath] at (150, 150) {$C_{12}$};
	\node[font=\boldmath] at (90, 90) {$C_{21}$};
	\node[font=\boldmath] at (150, 90) {$C_{22}$};
\end{scope}

\node[circle, fill=myred, inner sep=1.5pt] at (150, 160) (u_1) {};
\node[circle, fill=myred, inner sep=1.5pt] at (150, 140) (u_2) {};

\node[] at (135, 160) {$u_1$};
\node[] at (135, 140) {$u_2$};

\begin{scope}[shift={(210,0)}]
	\node[] at (40+\bmargin,100) (S_1) {$S_1$};
	\node[] at (20+\bmargin,80) (S_2) {$S_2$};
	\draw[myblue, thick] (S_1) circle (12);
	\draw[myblue, thick] (S_2) ellipse (12);
\end{scope}

\coordinate (T1a) at (249.09,90.19);
\coordinate (T1b) at (260.85,110.97);
\coordinate (T2a) at (228.05,71.01);
\coordinate (T2b) at (241.69,90.56);

\fill[mycyan, fill opacity=0.3] (u_1) -- (T1a) arc (234.835:66.142+360:12) -- cycle;
\draw[myblue, thin] (u_1) -- (T1a);
\draw[myblue, thin] (u_1) -- (T1b);

\fill[mycyan, fill opacity=0.3] (u_2) -- (T2a) arc (228.526:61.669+360:12) -- cycle;
\draw[myblue, thin] (u_2) -- (T2a);
\draw[myblue, thin] (u_2) -- (T2b);

\draw[myblue, thick] (S_1) circle (12);
\draw[myblue, thick] (S_2) ellipse (12);

\end{tikzpicture}
    \caption{Disjoint blue neighbourhoods of two vertices in $X_{\R}$. The
    symmetric red neighbourhoods $T_1,T_2\subseteq T$ are omitted.}
    \label{fig:structure_for_conclusion}
\end{figure}

By the current structure, $S_i\subseteq S$ and $T_i\subseteq T$ for $i=1,2$.
Moreover, $S_1\cap S_2=\emptyset$ and $T_1\cap T_2=\emptyset$.

The minimum-degree condition gives
\begin{align}
|S_1|+|C_{11}|+|C_{12}| &\ge d(u_1)\ge\delta^*(n), \label{eq:delta*-1}\\
|S_2|+|C_{11}|+|C_{12}| &\ge d(u_2)\ge\delta^*(n), \label{eq:delta*-2}\\
|T_1|+|C_{11}|+|C_{21}| &\ge d(v_1)\ge\delta^*(n), \label{eq:delta*-3}\\
|T_2|+|C_{11}|+|C_{21}| &\ge d(v_2)\ge\delta^*(n). \label{eq:delta*-4}
\end{align}

It remains to show that this forced configuration is impossible.

Combining~\eqref{eq:delta*/2-1},~\eqref{eq:delta*/2-2}, and
\eqref{eq:delta*-1}--\eqref{eq:delta*-4}, and using
$|S|\ge|S_1|+|S_2|$ and $|T|\ge|T_1|+|T_2|$, we obtain
\begin{equation*}
2\underbrace{\bigl(|S|+|T|+|C_{11}|+|C_{12}|+|C_{21}|+|C_{22}|\bigr)}_{n}
+2|C_{11}|+|C_{12}|+|C_{21}|
\ge 4\delta^*(n)+2\left\lceil\frac{\delta^*(n)}{2}\right\rceil.
\end{equation*}
Rearranging and using Fact~\ref{fact:simple}, we obtain
\begin{equation*}
(|C_{11}| + |C_{12}|) + (|C_{11}| + |C_{21}|) \ge 2\delta^*(n).
\end{equation*}
Hence $|C_{11}| + |C_{12}| \ge \delta^*(n)$ or $|C_{11}| + |C_{21}| \ge
\delta^*(n)$. If $|C_{11}| + |C_{12}| \ge \delta^*(n)$, then together
with~\eqref{eq:delta*/2-1} and Fact~\ref{fact:simple} we get
\begin{equation*}
|C_{11}| + |C_{12}| + |C_{21}| + |C_{22}| + |S| \ge \delta^*(n) + \left\lceil \frac{\delta^*(n)}{2} \right\rceil \ge n.
\end{equation*}
Thus $T=\emptyset$, contradicting~\eqref{eq:3}. If instead
$|C_{11}|+|C_{21}|\ge \delta^*(n)$, then the analogous argument
using~\eqref{eq:delta*/2-2}
gives $S=\emptyset$, again contradicting~\eqref{eq:3}. Hence
$\tc_2(G,\varphi) \le 3$. Since $\varphi$ was arbitrary, we conclude that
$\tc_2(G) \le 3$.
\end{proof}

\section*{Acknowledgements}
We would like to thank Matías Pavez-Signé for inviting us to the Santiago Summer
Workshop in Combinatorics, where this work was started. C. Bispo was supported
by CAPES (88887.015387/2024-00). G. Kontogeorgiou was supported by ANID (Grant
CMM Basal FB210005) and ANID-FONDECYT Postdoctorado (Grant 3250479). M. Lage was
supported by FAPESP (2025/06707-6 and 2025/14743-2). G. O. Mota was supported by
CNPq (420838/2025-2 and 315916/2023-0) and FAPESP (2024/13859-4 and
2023/03167-5). B. Skarmeta was supported by ANID (Grant CMM Basal FB210005) and Fondecyt Regular (Grant 1241398). This study was financed in
part by CAPES, Coordenação de Aperfeiçoamento de Pessoal de Nível Superior,
Brasil, Finance Code~001.

\bibliographystyle{plain}
\bibliography{refs}

\end{document}